\newcommand{\BB}{\mathcal{B}}
\newcommand{\CB}{\mathscr{B}}
\newcommand{\cc}{{\bm c}}
\newcommand{\CC}{\field{C}}
\newcommand{\Circled}[1]
	{\tikz[baseline=-3]\node[circle,draw,scale=.7]
	{\scriptsize$#1$};}
\newcommand{\ddim}{\operatorname{\underline{dim}}}
\newcommand{\EE}{\mathbf{E}}
\newcommand{\field}[1]{\mathbf{#1}}
\newcommand{\GG}{\mathcal{G}}
\newcommand{\ii}{{\mathbf i}}
\newcommand{\IN}{\operatorname{in}}
\newcommand{\LL}{\mathcal{L}}
\newcommand{\nc}{\operatorname{nc}}
\newcommand{\nz}{\operatorname{nz}}
\newcommand{\out}{\operatorname{out}}
\newcommand{\pwp}{\mathfrak{P}}
\newcommand{\QQ}{\field{Q}}
\newcommand{\RR}{\field{R}}
\newcommand{\seg}{\operatorname{seg}}
\newcommand{\VV}{\mathbf{V}}
\newcommand{\wt}{{\rm wt}}
\newcommand{\zz}{{\bm z}}
\newcommand{\ZZ}{\field{Z}}
\theoremstyle{plain}
\newtheorem{thm}{Theorem}[section]
\newtheorem{lemma}[thm]{Lemma}
\newtheorem{prop}[thm]{Proposition}
\newtheorem{cor}[thm]{Corollary}
\theoremstyle{definition}
\newtheorem{dfn}[thm]{Definition}
\newtheorem{ex}[thm]{Example}
\newtheorem{remark}[thm]{Remark}
\numberwithin{equation}{section}
\numberwithin{figure}{section}
\numberwithin{table}{section}
\begin{document}
\title[The Gindikin-Karpelevich formula in type $A$]{A combinatorial description of the Gindikin-Karpelevich formula in type $A$}
\author{Kyu-Hwan Lee}
\address{Department of Mathematics \\ University of Connecticut \\  Storrs, CT 06269-3009.}
\email{khlee@math.uconn.edu}
\urladdr{http://www.math.uconn.edu/~khlee}
\author{Ben Salisbury}
\address{Department of Mathematics \\ University of Connecticut \\ Storrs, CT 06269-3009.}
\email{benjamin.salisbury@uconn.edu}
\urladdr{http://www.math.uconn.edu/~salisbury}
\keywords{crystals, Gindikin-Karpelevich, Young tableaux, MV polytopes, quiver varieties}
%\subjclass{}
\date{\today}
\maketitle

\begin{abstract}
A combinatorial description of the crystal $\mathcal{B}(\infty)$ for finite-dimen\-sional simple Lie algebras in terms of Young tableaux was developed by J.\ Hong and H.\ Lee. Using this description, we obtain a combinatorial rule for expressing the Gindikin-Karpelevich formula as a sum over $\mathcal{B}(\infty)$ when the underlying Lie algebra is of type $A$. We also interpret our description in terms of MV polytopes and irreducible components of quiver varieties.
\end{abstract}

\setcounter{section}{-1}
%%%%%%%%%%%%%%%%%%%%%%%
%%%%%%%%%%%%%%%%%%%%%%%
%%%%%%%%%%%%%%%%%%%%%%%
%%%%%%%%%%%%%%%%%%%%%%%
\section{Introduction}

Let $F$ be a $p$-adic field and let $N^-$ be the maximal unipotent subgroup of $\operatorname{GL}_{r+1}(F)$ with maximal torus $T$. Let $f^\circ$ denote the standard spherical vector corresponding to an unramified character $\chi$ of $T$. Let $T(\CC)$ be the maximal torus in the $L$-group $\operatorname{GL}_{r+1}(\CC)$ of $\operatorname{GL}_{r+1}(F)$, and let $\zz \in T(\CC)$ be the element corresponding to $\chi$ via the Satake isomorphism.

The Gindikin-Karpelevich formula for the longest element of the Weyl group calculates the integral of the function $f^\circ$ over $N^-(F)$ as a product over the set $\Phi^+$ of positive roots:
\begin{equation}\label{eq:GK}
\int_{N^-(F)} f^\circ({\bm n}) \,\mathrm{d}{\bm n} = \prod_{\alpha \in \Phi^+}
\frac{1-t^{-1}\zz^\alpha}{1-\zz^\alpha},
\end{equation}  where $t$ is the cardinality of the residue field of $F$.
Recently, in the works \cite{BBF:11,BN:10} of Brubaker-Bump-Friedberg and Bump-Nakasuji, the product is written as a sum over the crystal $\BB(\infty)$. (See
also \cite{McN:11}.) More precisely, they prove
\[
\prod_{\alpha\in \Phi^+} \frac{1-t^{-1}\zz^\alpha}{1-\zz^\alpha} = \sum_{b\in
\BB(\infty)} G_\ii^{(e)}(b)t^{\langle \wt(b),\rho\rangle} \zz^{-\wt(b)},
\]  
where $\rho$ is the half-sum of the positive roots, $\wt(b)$ is the weight of $b$, and the coefficients $G_\ii^{(e)}(b)$ are defined using so-called BZL paths. An important observation here is that the coefficient of $\zz^{-\wt(b)}$ is some power of $1-t^{-1}$.

This definition of the coefficients makes it necessary to compute the whole crystal graph. However, one can also define the coefficients without the need for BZL paths. In the paper \cite{KL:11}, Kim and K.-H.\ Lee adopt Lusztig's parametrization of elements of canonical basis $\CB$ and prove that
\[
\prod_{\alpha\in\Phi^+} \frac{1-t^{-1}\zz^\alpha}{1-\zz^\alpha} = \sum_{b\in \CB}
(1-t^{-1})^{\nz(\phi_\ii(b))}\zz^{-\wt(b)}.
\]
(See Proposition \ref{prop:kl} for the definition of $\nz(\phi_\ii(b))$.)  
 
In this paper, we are interested in replacing the set $\BB(\infty)$ or $\CB$ with different realizations of crystals to obtain more concrete descriptions of the coefficients in the sum. Much work has been done on realizations of crystals (e.g., \cite{Kam:10,Kang:03,KN:94,KS:97,Lit:95}). In the case of $\BB(\infty)$ for finite-dimensional simple Lie algebras, Hong and H. Lee used semistandard Young tableaux to obtain a realization of crystals \cite{HL:08}. In the first part of this paper, we will use the semistandard Young tableaux realization of type $A$ to rewrite the sum as a sum over a set $\mathcal{T}(\infty)$ of tableaux. We observe that the appropriate data to define the coefficient comes from a consecutive string of letters $k$ in the tableaux, which we call a {\em $k$-segment}. Our result is
\begin{equation} \label{eqn-first}
\prod_{\alpha\in\Phi^+} \frac{1-t^{-1}\zz^\alpha}{1-\zz^\alpha} = \sum_{b\in
\mathcal{T}(\infty) } (1-t^{-1})^{\seg(b)}\zz^{-\wt(b)},
\end{equation}
where $\seg(b)$ is the total number of $k$-segments in the tableau $b$ as $k$ varies. The main point is that the exponent $\seg(b)$ can be read off immediately from the tableau $b$.

In the second part of the paper, we use Kamnitzer's MV polytopes (\cite{Kam:07,Kam:10}) and Kashiwara and Saito's geometric construction (\cite{KS:97}) of crystals to express the sum as sums over these objects, respectively. The exponent $\seg(b)$ will have a concrete meaning in each of these realizations. Relationships among these realizations of crystals are more or less known. Therefore, new descriptions will follow from (\ref{eqn-first}) once we make necessary interpretations.

We hope to extend our results to other finite types in future work \cite{LS2:11}.

The outline of this paper is as follows. In Section \ref{sec:bases}, we briefly review the notions of Kashiwara's crystals and Lusztig's canonical bases to fix notations, and we also review BZL paths (or string parametrizations) and Lusztig's parametrizations of elements of the canonical basis. In Section \ref{sec:Binfinity-tableaux} we recall the Young tableaux realization of $\BB(\infty)$. Our main result is presented in Section \ref{sec:typeA}. In the last section, we investigate connections of the main result to MV polytopes and geometric construction of crystals.

\subsection*{Acknowledgements} K.-H.\ L.\ and B.\ S.\ thank the anonymous referees for their helpful comments and suggestions.
At the beginning of this work, K.-H.\ L.\ benefited greatly from the Banff workshop on ``Whittaker Functions, Crystal Bases, and Quantum Groups'' in June 2010 and would like to thank the organizers---B.\ Brubaker, D.\ Bump, G.\ Chinta and P.\ Gunnells. 
B.\ S.\ would like to thank Daniel Bump for sending Sage \cite{combinat,sage} code for the $\BB(\infty)$ crystal. 
\vskip 0.8 cm

%%%%%%%%%%%%%%%%%%%%%%%
%%%%%%%%%%%%%%%%%%%%%%%
%%%%%%%%%%%%%%%%%%%%%%%
%%%%%%%%%%%%%%%%%%%%%%%
\section{Canonical bases and crystals}\label{sec:bases}

Let $r\ge 1$ and suppose $\mathfrak{g} = \mathfrak{sl}_{r+1}$ with simple roots $\{ \alpha_1,\dots,\alpha_r\}$, and let $I = \{1,\dots,r\}$. Let $P$ and $P^+$ denote the wight lattice and the set of dominant integral weights, respectively. Denote by $\Phi$ and $\Phi^+$, respectively, the set of roots and the set of positive roots. Let $\{\alpha_1^\vee,\dots,\alpha_r^\vee\}$ be the set of coroots and define a pairing $\langle \ ,\ \rangle \colon P^\vee\times P \longrightarrow \ZZ$ by $\langle h,\lambda \rangle = \lambda(h)$, where $P^\vee$ is the dual weight lattice. Let $\mathfrak{h} = \CC \otimes_\ZZ P^\vee$ be the Cartan subalgebra, and let $\mathfrak{h}_\RR = \RR \otimes_\ZZ P^\vee$ be its real form.

Let $W$ be the Weyl group of $\Phi$ with simple reflections $\{\sigma_1,\dots,\sigma_r\}$. To each reduced expression $w= \sigma_{i_1}\dots \sigma_{i_k}$ for $w\in W$, we associate a {\it reduced word}, which is defined to be the $k$-tuple of positive integers $\ii = (i_1,\dots,i_k)$, and denote the set of all reduced words $\ii$ of $w\in W$ by $R(w)$. In particular, we let $w_\circ$ be the longest element of the Weyl group and call $\ii = (i_1,\dots,i_N) \in R(w_\circ)$ a {\it long word}, where $N$ is the number of positive roots.

Suppose that $q$ is an indeterminate, and let $U_q(\mathfrak{g})$ be the quantized universal enveloping algebra of $\mathfrak{g}$, which is a $\QQ(q)$-algebra generated by $e_i$, $f_i$, and $q^h$, for $i\in I$ and $h\in P^\vee$, subject to certain relations. We denote by $U_q^-(\mathfrak{g})$ the subalgebra generated by the $f_i$'s.

We write 
\[
f_i^{(c)} := \frac{f_i^c}{[c]!}, \ \ \ [c]! := \prod_{j=1}^c
\frac{q^{j}-q^{-j}}{q-q^{-1}}, \ \ \ c\in \ZZ_{> 0}.
\]
Given $\ii = (i_1,\dots,i_N)\in R(w_\circ)$ and $\cc = (c_1,\dots,c_N) \in
\ZZ_{\ge0}^N$, define
\begin{equation}\label{eq:lustparam}
f_\ii^{\cc} = f_{i_1}^{(c_1)} T_{i_1}(f_{i_2}^{(c_2)}) T_{i_1}
T_{i_2}(f_{i_3}^{(c_3)})\cdots T_{i_1} T_{i_2}\cdots T_{i_{N-1}}(f_{i_N}^{(c_N)}),
\end{equation}
where $T_i$ is the Lusztig automorphism of $U_q(\mathfrak{g})$ defined in Section 37.1.3 of \cite{Luszt:93} (there, it is denoted $T''_{i,-1}$). Then the set $\CB_\ii = \{ f_\ii ^{\cc} : \cc \in \ZZ_{\ge0}^N \}$ forms a $\QQ(q)$-basis of $U_q^-(\mathfrak{g})$, called the {\it PBW basis}. Let $\overline{\phantom{a}}\colon U_q(\mathfrak{g}) \longrightarrow U_q(\mathfrak{g})$ be the $\QQ$-algebra automorphism such that
\[
e_i \mapsto e_i, \ \ \ \ f_i \mapsto f_i, \ \ \ \ q\mapsto q^{-1}, \ \ \ \ q^h \mapsto
q^{-h},
\]
for $i \in I$ and $h\in P^\vee$.

\begin{prop}[\cite{Luszt:90}] \label{prop:Lusz}
Assume that $\ii \in R(w_\circ)$.
\begin{enumerate}
\item Assume that $\mathscr{L}_\ii$ is the $\ZZ[q]$-span of the basis $\CB_\ii$. Then $\mathscr{L}_\ii$ is independent of $\ii \in R(w_\circ)$, so we denote it simply by $\mathscr{L}$.
\item Suppose that $\pi\colon \mathscr{L} \longrightarrow \mathscr{L}/q\mathscr{L}$ is the canonical projection. Then $\pi(\CB_\ii)$ is a $\ZZ$-basis of $\mathscr{L}/q\mathscr{L}$, independent of $\ii$. Moreover, the restriction of $\pi$ to $\mathscr{L}\cap\overline{\mathscr{L}}$ is an isomorphism of $\ZZ$-modules $\overline\pi \colon \mathscr{L}\cap\overline{\mathscr{L}} \longrightarrow \mathscr{L}/q\mathscr{L}$, and $\CB = \overline\pi^{-1}(\pi(\CB_\ii))$ is a $\QQ(q)$-basis of $U_q^-(\mathfrak{g})$.
\end{enumerate}
\end{prop}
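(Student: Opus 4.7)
The plan is to reduce both statements to rank 2 using the Matsumoto--Tits theorem, which guarantees that any two elements of $R(w_\circ)$ are connected by a sequence of braid moves. Thus it suffices to compare $\CB_\ii$ and $\CB_{\ii'}$ when $\ii$ and $\ii'$ differ by a single braid move. The commutation case $\sigma_i \sigma_j = \sigma_j \sigma_i$ for $\langle \alpha_i^\vee,\alpha_j\rangle = 0$ is immediate: here $T_i T_j = T_j T_i$ and $T_i(f_j) = f_j$, so the affected consecutive factors in (\ref{eq:lustparam}) can be swapped, matching $f_\ii^\cc$ with $f_{\ii'}^{\cc'}$ after the obvious permutation of the entries of $\cc$.

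The real work is the type $A_2$ braid move $\sigma_i \sigma_j \sigma_i = \sigma_j \sigma_i \sigma_j$. I would expand the three relevant factors of (\ref{eq:lustparam}) on each side using the explicit formula for the Lusztig automorphism on $U_q^-(\mathfrak{g})$ and collect the result into a linear combination of PBW monomials for the other word. The main technical point is to verify that the transition matrix between the restrictions of $\CB_\ii$ and $\CB_{\ii'}$ to each weight space is unipotent upper triangular with respect to a suitable ordering on the exponent vectors (for instance, a lexicographic order induced by the convex ordering on $\Phi^+$ attached to $\ii$), with off-diagonal entries in $q\ZZ[q]$. Once this triangularity is in hand, $\mathscr{L}_\ii = \mathscr{L}_{\ii'}$ and $\pi(\CB_\ii) = \pi(\CB_{\ii'})$ follow immediately, giving part (1) and the $\ii$-independence of $\pi(\CB_\ii)$ in part (2). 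This rank-2 computation is the main obstacle of the whole argument; everything else is formal bookkeeping around it.

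For the remaining assertions in part (2), I would use the same rank-2 machinery to establish an upper-unitriangular bar-invariance relation of the shape
\[
\overline{f_\ii^{\cc}} = f_\ii^{\cc} + \sum_{\cc' > \cc} a_{\cc,\cc'}(q)\, f_\ii^{\cc'}, \qquad a_{\cc,\cc'}(q) \in \ZZ[q,q^{-1}].
\]
A standard inductive argument on the ordering then produces unique coefficients $\lambda_{\cc,\cc'}(q) \in q\ZZ[q]$ such that $b_\cc := f_\ii^\cc + \sum_{\cc' > \cc} \lambda_{\cc,\cc'}(q)\, f_\ii^{\cc'}$ is bar-invariant. These $b_\cc$ lie in $\mathscr{L} \cap \overline{\mathscr{L}}$ and satisfy $\pi(b_\cc) = \pi(f_\ii^\cc)$, which simultaneously shows that $\overline\pi$ is an isomorphism and that $\CB := \{b_\cc\}$ is a $\QQ(q)$-basis of $U_q^-(\mathfrak{g})$ lifting $\pi(\CB_\ii)$. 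The $\ii$-independence of $\pi(\CB_\ii)$ already proved then yields the $\ii$-independence of $\CB$.
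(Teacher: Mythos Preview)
The paper does not prove this proposition at all: it is quoted from \cite{Luszt:90} as background, with no argument given. So there is no ``paper's own proof'' to compare against; the relevant comparison is with Lusztig's original argument.

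Your outline is essentially Lusztig's strategy: reduce to rank~2 via Matsumoto--Tits, and in the nontrivial $A_2$ case verify by direct computation that the change-of-basis matrix between $\CB_\ii$ and $\CB_{\ii'}$ is unitriangular with off-diagonal entries in $q\ZZ[q]$. That computation is indeed the heart of the matter and is carried out explicitly in \cite{Luszt:90}. One point to be careful about: the bar-triangularity relation you write down does not follow from ``the same rank-2 machinery'' automatically; one also needs to know how the bar involution interacts with the automorphisms $T_i$ (roughly, $\overline{T_i(x)} = T_i^{-1}(\overline{x})$), and then combine this with the rank-2 transition formulas to see that $\overline{\mathscr{L}} = \mathscr{L}$ and that $\overline{f_\ii^\cc} \equiv f_\ii^\cc \pmod{q\mathscr{L}}$. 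Once that is in hand, your inductive construction of the $b_\cc$ is the standard one and is correct. As a sketch your proposal is sound, but you should flag that the rank-2 PBW transition formulas and the compatibility of bar with the $T_i$ are genuine computations you are importing, not formalities.
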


The basis $\CB$ is called the {\it canonical basis} of $U_q^-(\mathfrak{g})$. Note that each $b \in \CB$ satisfies $b \equiv f_\ii^\cc \bmod q\mathscr{L}$ for some $\ii\in R(w_\circ)$ and $\cc\in\ZZ_{\ge0}^N$.

\medskip

Let $\widetilde e_{i}$, $\widetilde f_{i}$ be the Kashiwara operators on $U_q^-(\mathfrak{g})$ defined in \cite{Kash:91}. Let $\mathcal{A} \subset \QQ(q)$ be the subring of functions regular at $q=0$ and define $\LL(\infty)$ to be the $\mathcal{A}$-lattice spanned by
\[
S= \{ \widetilde f_{i_1} \widetilde f_{i_2} \cdots \widetilde f_{i_t} \cdot 1 \in U_q^-(\mathfrak{g}) : t\ge 0, \ i_k \in I \}.
\]

\begin{prop}[\cite{Kash:91}] \hfill
\begin{enumerate}
\item Let $\pi' \colon \LL(\infty) \longrightarrow \LL(\infty)/q\LL(\infty)$ be the natural projection and set $\BB(\infty) = \pi'(S)$. Then $\BB(\infty)$ is a $\QQ$-basis of $\LL(\infty)/q\LL(\infty)$.
\item The operators $\widetilde e_i$ and $\widetilde f_i$ act on $\LL(\infty)/q\LL(\infty)$ for each $i\in I$. Moreover, $\widetilde e_i \colon \BB(\infty) \longrightarrow \BB(\infty)\sqcup \{0\}$ and $\widetilde f_i\colon \BB(\infty) \longrightarrow \BB(\infty)$ for each $i\in I$. For $b,b'\in \BB(\infty)$, we have $\widetilde f_i b = b'$ if and only if $\widetilde e_ib' = b$. 
\item For each $b \in \BB(\infty)$, there is a unique element $\GG(b) \in \LL(\infty)\cap\overline{\LL(\infty)}$ such that $\pi'(\GG(b)) = b$. The set $\GG(\infty) = \{ \GG(b) : b \in \BB(\infty) \}$ forms a basis of $U_q^-(\mathfrak{g})$.
\end{enumerate}
\end{prop}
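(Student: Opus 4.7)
The plan is to follow Kashiwara's strategy from \cite{Kash:91}. The starting point is to introduce, for each $i \in I$, auxiliary operators $e_i'$ on $U_q^-(\mathfrak{g})$ coming from the coproduct of $U_q(\mathfrak{g})$, and to establish a $q$-analog of the product rule $e_i'(xy) = e_i'(x)\,y + q^{\langle \alpha_i^\vee,\,-\wt x\rangle}\, x\, e_i'(y)$. From this one deduces the fundamental decomposition
\[
U_q^-(\mathfrak{g}) \;=\; \bigoplus_{n\ge 0} f_i^{(n)} \cdot \ker\!\bigl(e_i' \colon U_q^-(\mathfrak{g}) \to U_q^-(\mathfrak{g})\bigr).
\]
The Kashiwara operators $\widetilde e_i, \widetilde f_i$ are then defined from this decomposition: writing $u = \sum_n f_i^{(n)} u_n$ with $u_n \in \ker e_i'$, set $\widetilde e_i u = \sum_n f_i^{(n-1)} u_n$ and $\widetilde f_i u = \sum_n f_i^{(n+1)} u_n$.

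Statements (1) and (2) would then be proved jointly via Kashiwara's ``grand loop'' induction on the weight component $U_q^-(\mathfrak{g})_{-\xi}$, with $\xi$ ranging over the positive root lattice. At each weight level one simultaneously establishes: (i) $\LL(\infty)$ is stable under $\widetilde e_i$ and $\widetilde f_i$ for every $i$; (ii) the image $\BB(\infty)$ of $S$ is a $\QQ$-basis of $\LL(\infty)/q\LL(\infty)$ in that weight; and (iii) the induced maps $\widetilde e_i,\widetilde f_i$ on $\BB(\infty)$ satisfy the adjunction $\widetilde f_i b = b' \iff \widetilde e_i b' = b$. The induction hypotheses are needed to compare, for different pairs $i,j$, the two $n$-decompositions coming from $e_i'$ and $e_j'$ at a given weight, which is what makes the loop ``grand.''

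For statement (3), I would invoke Proposition \ref{prop:Lusz}: the bar involution and Lusztig's $\ZZ[q]$-lattice $\mathscr{L}$ give an isomorphism $\mathscr{L}\cap\overline{\mathscr{L}} \xrightarrow{\sim} \mathscr{L}/q\mathscr{L}$, and the preimage of $\pi(\CB_\ii)$ is the canonical basis $\CB$. The next step is to prove $\LL(\infty) = \mathscr{L}$, by checking both inclusions: one direction is a stability argument using the Kashiwara-operator characterization of $\LL(\infty)$, while the other uses the PBW description of $\mathscr{L}$ together with the fact that each $f_\ii^\cc$ is congruent, modulo $q\LL(\infty)$, to a monomial in the $\widetilde f_{i_k}$ applied to $1$. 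Granted this equality of lattices, defining $\GG(b)$ as the unique element of $\LL(\infty)\cap\overline{\LL(\infty)}$ with $\pi'(\GG(b)) = b$ identifies $\GG(\infty)$ with $\CB$, and both the uniqueness and the basis property are then immediate consequences of Proposition \ref{prop:Lusz}(2).

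The principal obstacle is unquestionably the grand loop induction itself: each of the invariants above at a given weight depends on all of the others at strictly smaller weights, so no proper subset of them can be proved in isolation. The technically heaviest step is the rank-two analysis, which controls how the $i$-decomposition and the $j$-decomposition interact when $\langle \alpha_i^\vee,\alpha_j\rangle \ne 0$; this is where the Serre relations enter in an essential way and where the bulk of the combinatorial labor of Kashiwara's argument is concentrated.
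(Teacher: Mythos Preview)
The paper does not prove this proposition at all: it is quoted verbatim as a result of Kashiwara \cite{Kash:91}, with no accompanying argument. There is therefore no ``paper's own proof'' to compare your proposal against.

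That said, your outline for parts (1) and (2) is an accurate high-level sketch of Kashiwara's original argument in \cite{Kash:91}: the $e_i'$-decomposition, the definition of $\widetilde e_i,\widetilde f_i$, and the grand loop induction on weight are exactly the ingredients used there. For part (3), however, your route departs from Kashiwara's. You deduce the existence and uniqueness of $\GG(b)$ by first identifying $\LL(\infty)$ with Lusztig's lattice $\mathscr{L}$ and then invoking Proposition~\ref{prop:Lusz}. Kashiwara's proof in \cite{Kash:91} does not use Lusztig's PBW lattice or canonical basis; instead he proves directly that the triple $(\LL(\infty),\overline{\LL(\infty)},U_q^-(\mathfrak{g})_{\ZZ[q,q^{-1}]})$ is \emph{balanced}, which yields the isomorphism $\LL(\infty)\cap\overline{\LL(\infty)}\cap U_q^-(\mathfrak{g})_{\ZZ[q,q^{-1}]} \xrightarrow{\sim} \LL(\infty)/q\LL(\infty)$ and hence $\GG(b)$. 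Your approach is essentially the Grojnowski--Lusztig comparison \cite{GL:93} run backwards, which the paper cites separately after the proposition; it is a valid alternative, but it is not the argument attributed to \cite{Kash:91}.
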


The basis $\BB(\infty)$ is called the {\it crystal basis} of $U_q^-(\mathfrak{g})$, and the basis $\GG(\infty)$ is called the {\it global crystal basis} of $U_q^-(\mathfrak{g})$. In \cite{GL:93}, Grojnowski and Lusztig showed that $\GG(\infty) = \CB$. However, there are two different, but standard, ways to parametrize elements of a canonical basis or a global crystal basis. For a choice of $\ii \in R(w_\circ)$, there is a unique path, called {\it BZL path}, from a crystal element $b$ to the unique weight zero crystal element $b_\infty$. The parametrization coming from BZL paths is called the {\it string parametrization} of $b$, which we will denote by $\psi_\ii(b)$. See the definition below. On the other hand, each canonical basis element comes from some $f_\ii^\cc \in \CB_\ii$ as in Proposition \ref{prop:Lusz}. From this we obtain a parametrization $\cc\in\ZZ_{\ge0}^N$ of the element in the canonical basis. This latter parametrization is called the {\it Lusztig parametrization} of $b\in \CB$, and we denote it by $\phi_\ii(b)$. Berenstein and Zelevinsky calculated a way to link these parametrizations \cite{BZ:96}. The connection between these two parametrizations is crucial to our arguments below.

\medskip

One may define the notion of a crystal abstractly. A {\it $U_q(\mathfrak{g})$-crystal} is a set $\BB$ together with maps
\[
\wt\colon \BB \longrightarrow P, \ \ \ \ \ \
\widetilde e_i, \widetilde f_i\colon \BB \longrightarrow \BB\sqcup\{0\},\ \ \ \ \ \
\varepsilon_i,\varphi_i\colon \BB \longrightarrow \ZZ\sqcup\{-\infty\},
\] 
that satisfy a certain set of axioms (see, e.g.\ \cite{HK:02}), and a {\it crystal morphism} is defined in a natural way. We recall the tensor product of crystals and the signature rule, which are necessary to understand the combinatorics of $\BB(\infty)$.

\begin{dfn}
Let $\BB_1$ and $\BB_2$ be $U_q(\mathfrak{g})$-crystals. Then the {\it tensor product} of crystals $\BB_1\otimes \BB_2$ is $\BB_1 \times \BB_2$ as a set, endowed with the following crystal structure. The Kashiwara operators are given by
\begin{align*}
\widetilde e_i(b_1\otimes b_2)
&= \begin{cases}
\widetilde e_ib_1 \otimes b_2 & \text{ if } \varphi_i(b_1) \ge \varepsilon_i(b_2),\\
b_1 \otimes \widetilde e_ib_2 & \text{ otherwise},
\end{cases}\\
\widetilde f_i(b_1\otimes b_2)
&= \begin{cases}
\widetilde f_ib_1 \otimes b_2 & \text{ if } \varphi_i(b_1) > \varepsilon_i(b_2),\\
b_1 \otimes \widetilde f_ib_2 & \text{ otherwise}.
\end{cases}
\end{align*}
We also have
\begin{align*}
\wt(b_1\otimes b_2) &= \wt(b_1)+\wt(b_2),\\
\varepsilon_i(b_1\otimes b_2) &= \max\bigl(\varepsilon_i(b_1),\varepsilon_i(b_2) - \langle \alpha_i^\vee, \wt(b_1)\rangle\bigr),\\
\varphi_i(b_1\otimes b_2) &= \max\bigl(\varphi_i(b_2),\varphi_i(b_1)+\langle \alpha_i^\vee,\wt(b_2)\rangle\bigr).
\end{align*}
\end{dfn}

Using the tensor product rule above, one obtains a way to determine the component of a tensor product on which a Kashiwara operator acts, called the {\it signature rule}. Let $i\in I$ and set $\BB = \BB_1 \otimes \cdots \otimes \BB_m$. Take $b = b_1\otimes \cdots \otimes b_m \in \BB$. To calculate either $\widetilde e_i$ or $\widetilde f_i$, create a sequence of $+$ and $-$ according to
\[
(\ \underbrace{-\cdots-}_{\varepsilon_i(b_1)},\underbrace{+\cdots+}_{\varphi_i(b_1)},
\cdots ,
\underbrace{-\cdots-}_{\varepsilon_i(b_m)},\underbrace{+\cdots+}_{\varphi_i(b_m)} \ )
\]
Cancel any $+-$ pair to obtain a sequence of $-$'s followed by $+$'s. We call the resulting sequence the {\it $i$-signature} of $b$, and denote it by $i$-sgn$(b)$. Then $\widetilde e_i$ acts on the component of $b$ corresponding to the rightmost $-$ in $i$-sgn$(b)$ and $\widetilde f_i$ acts on the component of $b$ corresponding to the leftmost $+$ in $i$-sgn$(b)$. If there is no remaining $-$ (or $+$,  respectively) in $i$-sgn$(b)$ then we have $\widetilde e_i(b)=0$ (or $\widetilde f_i(b) =0$, respectively). 

As an illustration, we apply this rule to the semistandard Young tableaux realization of $U_q(\mathfrak{sl}_{r+1})$-crystals $\BB(\lambda)$ of highest weight representations for $\lambda$ a dominant integral weight. This description is according to Kashiwara and Naka\-shima. See \cite{KN:94} or \cite{HK:02} for the details of this construction including precise definitions of $\varepsilon_i, \varphi_i, \mathrm{wt}$ in this case.
For the fundamental weight $\Lambda_1$, the crystal graph of $\BB(\Lambda_1)$ is given by
\[
\BB(\Lambda_1): \ \ \ \ 
\begin{tikzpicture}[scale=1.5,baseline=-4]
 \node (1) at (0,0) {$\boxed{1}$};
 \node (2) at (1,0) {$\boxed{2}$};
 \node (d) at (2,0) {$\cdots$};
 \node (n-1) at (3,0) {$\boxed{r}$};
 \node (n) at (4.2,0) {$\boxed{r+1}$};
 \draw[->] (1) to node[above]{\tiny$1$} (2);
 \draw[->] (2) to node[above]{\tiny$2$} (d);
 \draw[->] (d) to node[above]{\tiny$r-1$} (n-1);
 \draw[->] (n-1) to node[above]{\tiny$r$} (n);
\end{tikzpicture}
\]
Using this fundamental crystal $\BB(\Lambda_1)$, we may understand any tableaux of shape $\lambda$ by embedding the corresponding crystal $\BB(\lambda)$ into
$\BB(\Lambda_1)^{\otimes m}$, where $m$ is the number of boxes in the $\lambda$ shape. For example, in type $A_4$, we have
\[
\BB(\Lambda_1+\Lambda_2+\Lambda_3) \ni b= \young(133,34,5) \mapsto
\young(3)\otimes\young(3)\otimes\young(4)\otimes\young(1)\otimes\young(3)\otimes\young(5)
\in \BB(\Lambda_1)^{\otimes 6}.
\]
With this image of the embedding, we may apply the signature rule to determine on which box $\widetilde f_i$ and $\widetilde e_i$ act. In this case, with $i=3$, we have $3$-$\operatorname{sgn}(b) = (+,+,-,\cdot,+,\cdot) = (+,\cdot,\cdot,\cdot,+,\cdot)$. Thus $\widetilde e_3b = 0$ and
\[
\widetilde f_3b = \young(134,34,5).
\]

For a given $\ii =(i_1, i_2, \dots, i_N) \in R(w_\circ)$, define the {\it BZL path} of $b\in \BB(\infty)$ as follows. Define $a_1$ to be the maximal integer such that $\widetilde e_{i_1}^{a_1} b \neq 0$. Then let $a_2$ be the maximal integer such that $\widetilde e_{i_2}^{a_2} \widetilde e_{i_1}^{a_1} b \neq 0$. Inductively, let $a_j$ be the maximal integer such that
\[
\widetilde e_{i_j}^{a_j} \widetilde e_{i_{j-1}}^{a_{j-1}} \cdots \widetilde e_{i_2}^{a_2} \widetilde e_{i_1}^{a_1} b \neq 0,
\]
for $j=1,\dots,N$.  Then we define $\psi_\ii(b) = (a_1,\dots,a_N)$.  Let
\[
C_\ii = \{ \psi_\ii(b) : b\in \BB(\infty) \}.
\]
The BZL paths are also known as {\it string parametrizations} or {\it Kashiwara data} in the literature (see, for example \cite{BZ:01,Kam:07}). The associated cones were studied by Littelmann in \cite{Lit:98}.  In particular, for $\ii=(i_1,i_2,\dots,i_N)\in R(w_\circ)$ and $b\in \BB(\infty)$, it is known that $\widetilde e_{i_N}^{a_N} \cdots \widetilde e_{i_2}^{a_2} \widetilde e_{i_1}^{a_1}b = b_\infty$, where $b_\infty$ is the unique element of weight zero in $\BB(\infty)$.

\medskip

In order to prove that one can obtain the coefficients in the expansion of the product in the Gindikin-Karpelevich formula using crystals of Young tableaux, we will need to first write the Gindikin-Karpelevich formula as a sum over elements of Lusztig's canonical basis, as shown in \cite{KL:11}.

\begin{prop}[{\cite{KL:11}}]\label{prop:kl}
Let $\CB$ be Lusztig's canonical basis and let $\ii \in R(w_\circ)$.  Then 
\[
\prod_{\alpha \in \Phi^+} \frac{1-t^{-1}\zz^{\alpha}}{1-\zz^\alpha} = \sum_{b\in\CB}
(1-t^{-1})^{\nz(\phi_\ii(b))}\zz^{-\wt(b)},
\]
where $\phi_\ii\colon \CB \longrightarrow \ZZ_{\ge0}^N$ is the map which takes elements in the canonical basis to their Lusztig parametrization and $\nz(\phi_\ii(b))$ is the number of nonzero elements in the sequence $\phi_\ii(b)$.
\end{prop}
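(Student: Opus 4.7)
The plan is to parametrize the sum on the right by $\cc \in \ZZ_{\ge 0}^N$ via Lusztig's parametrization $\phi_\ii$, then factor the resulting sum into a product of one-variable geometric-type sums indexed by the positive roots.

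First I would recall the key weight formula for PBW basis elements. Fixing $\ii = (i_1,\dots,i_N) \in R(w_\circ)$, set
\[
\beta_j = \sigma_{i_1}\sigma_{i_2}\cdots\sigma_{i_{j-1}}(\alpha_{i_j}), \qquad j = 1,\dots,N,
\]
so that $\{\beta_1,\dots,\beta_N\}$ is a bijective enumeration of $\Phi^+$. Since $T_i$ in~(\ref{eq:lustparam}) carries root vectors of weight $\gamma$ to root vectors of weight $\sigma_i(\gamma)$, the element $f_\ii^{\cc}$ lies in the weight space of weight $-\sum_{j=1}^N c_j\beta_j$. Because $f_\ii^{\cc} \equiv b \pmod{q\mathscr{L}}$ for $b \in \CB$ with $\phi_\ii(b) = \cc$, we have $\wt(b) = -\sum_{j=1}^N c_j \beta_j$, and hence $\zz^{-\wt(b)} = \prod_{j=1}^N \zz^{c_j \beta_j}$.

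Next, since $\phi_\ii\colon \CB \longrightarrow \ZZ_{\ge 0}^N$ is a bijection and $\nz(\cc) = \#\{j : c_j \ne 0\}$ splits as a sum over coordinates, I would rewrite the right-hand side as
\begin{align*}
\sum_{b \in \CB} (1-t^{-1})^{\nz(\phi_\ii(b))}\zz^{-\wt(b)}
&= \sum_{\cc \in \ZZ_{\ge 0}^N} \prod_{j=1}^N (1-t^{-1})^{[c_j>0]} \zz^{c_j \beta_j} \\
&= \prod_{j=1}^N \left( \sum_{c \ge 0} (1-t^{-1})^{[c>0]} \zz^{c\beta_j} \right),
\end{align*}
where $[\,\cdot\,]$ is the Iverson bracket. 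The factorization uses that $\ZZ_{\ge 0}^N$ is a product and the summand is multiplicative in the coordinates.

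Finally, I would evaluate each inner sum:
\[
\sum_{c \ge 0} (1-t^{-1})^{[c>0]} \zz^{c\beta_j}
= 1 + (1-t^{-1}) \frac{\zz^{\beta_j}}{1-\zz^{\beta_j}}
= \frac{1 - t^{-1}\zz^{\beta_j}}{1-\zz^{\beta_j}},
\]
viewed as a formal identity (or in a suitable convergence domain). Substituting and using that $\beta_j$ ranges over $\Phi^+$ as $j$ varies yields the Gindikin-Karpelevich product. The main obstacle is really the justification of the weight formula $\wt(f_\ii^\cc) = -\sum c_j \beta_j$ together with the bijectivity of $\phi_\ii$; both are standard consequences of Proposition~\ref{prop:Lusz} and Lusztig's results in \cite{Luszt:93,Luszt:90}, so once these are in hand the remainder is a bookkeeping computation.
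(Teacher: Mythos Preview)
Your argument is correct. Note, however, that the paper does not supply its own proof of this proposition: it is quoted as a result from \cite{KL:11}, so there is no in-paper proof to compare against. That said, the route you take---pull back the sum to $\ZZ_{\ge 0}^N$ via the bijection $\phi_\ii$, use the weight identity $\wt(f_\ii^{\cc}) = -\sum_j c_j\beta_j$ coming from the PBW construction, and then factor into one-variable sums over the positive roots---is exactly the standard argument and is essentially what underlies the result in \cite{KL:11}. The only points requiring outside input are the bijectivity of $\phi_\ii$ and the weight formula, both of which, as you note, are immediate from Proposition~\ref{prop:Lusz} and the properties of Lusztig's automorphisms $T_i$.
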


\begin{remark}
Proposition \ref{prop:kl} holds for any finite-dimensional simple Lie algebra.
\end{remark}

Now we need a way to change BZL paths of elements in $\BB(\infty)$ to Lusztig parametrizations of elements in $\CB$. The word we consider is
\[
\ii_{A_r} := (1,2,1,3,2,1,\dots,r,r-1,\dots,2,1) \in R(w_\circ).
\]
From here to the end of Section \ref{sec:typeA}, any dependence on $\ii$ will assume that $\ii = \ii_{A_r}$.

Associated to each entry in a given BZL path of a highest weight crystal $\BB(\lambda)$ is a decoration: a circle, a box, both a circle and a box, or neither. However, boxing does not occur in $\BB(\infty)$ (see \cite{BN:10}), so we only describe the circling rule. In type $A$, we write the BZL paths in triangles of the following form:
\begin{equation}\label{eq:tri1}
\begin{array}{c@{}c@{}c@{}c@{}c@{}c@{}c}
&&&a_1&&&\\
&&a_2&&a_3&&\\
&a_4&&a_5&&a_6&\\
\iddots&&\vdots&&\vdots&&\ddots
\end{array} 
\end{equation}
It will be beneficial to write the triangular arrays using matrix indices, so reindex the above in the following way:
\begin{equation}\label{eq:triangles}
\begin{array}{c@{}c@{}c@{}c@{}c@{}c@{}c}
&&&a_{1,1}&&&\\
&&a_{2,1}&&a_{2,2}&&\\
&a_{3,1}&&a_{3,2}&&a_{3,3}&\\
\iddots&&\vdots&&\vdots&&\ddots
\end{array} 
\end{equation}
This triangular array look more natural if we recall \cite{Lit:98} that \[a_{1,1} \ge 0; \quad a_{2,1} \ge a_{2,2} \ge 0 ; \quad a_{3,1} \ge a_{3,2} \ge a_{3,3} \ge 0 ; \quad  \ldots .\]  
If the entry $a_{j,\ell-1} = a_{j,\ell}$, then we circle $a_{j,\ell-1}$. We understand that the entries outside the triangle are zero, so the rightmost entry of a row is circled if it is zero. Moreover, we call the {\it $j$th row} of $\psi_\ii(b)$ the row which starts with $a_{j,1}$. Finally, to express this triangle in an inline form, we write $(a_{1,1};a_{2,1},a_{2,2};\dots;a_{r,1},\dots,a_{r,r})$.

\begin{ex}
Let $b_\infty = \widetilde e_1 \widetilde e_2^2 \widetilde e_3^2 \widetilde e_4^4 \widetilde e_2^2 \widetilde e_3^3 \widetilde e_1 \widetilde e_2 \, b$.  Then
\[
\psi_\ii(b) = 
\begin{array}{ccccccc}
&&&\Circled{0}&&&\\
&&\Circled{1}&&1&&\\
&3&&2&&\Circled{0}&\\
4&&\Circled{2}&&2&&1
\end{array}
=(\Circled{0};\Circled{1},1;3,2,\Circled{0}; 4, \Circled{2}, 2, 1).
\]
\end{ex}

The following proposition is crucial and immediately implies Corollary \ref{cor:BN} given below.

\begin{prop}[{\cite{BZ:96}}]\label{prop:Lus-to-BZL-A}
The map $\sigma_\ii\colon C_\ii \longrightarrow \ZZ_{\ge0}^N$, which takes the BZL path of an element $b\in \BB(\infty)$ to its corresponding Lusztig parametrization, is given by
\[
(a_{1,1},\dots,a_{r,r}) \mapsto (a_{1,1};a_{2,2},a_{2,1}-a_{2,2};\dots;a_{r,r},
a_{r,r-1}-a_{r,r}, \dots, a_{r,1} - a_{r,2}).
\]
\end{prop}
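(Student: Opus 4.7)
The plan is to proceed by induction on the rank $r$, exploiting the nested structure $\ii_{A_r} = \ii_{A_{r-1}} \cdot (r, r-1, \ldots, 1)$ of the standard reduced word. For the base case $r = 1$ we have $N = 1$, and both the BZL entry and the Lusztig coordinate reduce to $a_{1,1}$, so the claimed transformation is trivially the identity.

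For the inductive step, the prefix of $\ii_{A_r}$ of length $N - r$ is precisely $\ii_{A_{r-1}}$, reinterpreted as a reduced expression for the longest element of the type $A_{r-1}$ Weyl subgroup generated by $\sigma_1, \ldots, \sigma_{r-1}$. Recording the first $N - r$ string entries of $b \in \BB(\infty)$ stays within the corresponding Levi crystal, so the inductive hypothesis immediately supplies the transition rule for the top $r - 1$ rows of the triangle. It remains to verify the formula for the bottom row.

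For the bottom row, isolate the last $r$ factors of the PBW monomial $f_\ii^\cc$, namely
\[
T_{w_{r-1}}\!\left(f_r^{(c_{r,1})}\right) \cdot T_{w_{r-1}} T_r\!\left(f_{r-1}^{(c_{r,2})}\right) \cdots T_{w_{r-1}} T_r T_{r-1} \cdots T_2\!\left(f_1^{(c_{r,r})}\right),
\]
where $T_{w_{r-1}}$ abbreviates the composite Lusztig automorphism for the prefix $\ii_{A_{r-1}}$ and $c_{r,1}, \ldots, c_{r,r}$ are the last $r$ Lusztig coordinates. A direct root-system calculation shows that, for each $\ell$, the factor $T_{w_{r-1}} T_r T_{r-1} \cdots T_{r-\ell+2}\!\left(f_{r-\ell+1}\right)$ is a PBW root vector associated to the positive root $\alpha_\ell + \alpha_{\ell+1} + \cdots + \alpha_r$. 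Reducing modulo $q\LL(\infty)$ and tracking the signatures as $\widetilde e_r^{a_{r,1}} \widetilde e_{r-1}^{a_{r,2}} \cdots \widetilde e_1^{a_{r,r}}$ is applied yields the triangular identities $a_{r,k} = c_{r,1} + c_{r,2} + \cdots + c_{r, r-k+1}$, whose inversion gives $c_{r,1} = a_{r,r}$ and $c_{r,\ell} = a_{r, r-\ell+1} - a_{r, r-\ell+2}$ for $\ell \geq 2$, as claimed.

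The main obstacle is controlling the explicit interplay between the Lusztig automorphisms $T_i$ and the Kashiwara operators $\widetilde e_j$ at the level of $\BB(\infty)$. This is exactly the piecewise-linear analysis carried out by Berenstein-Zelevinsky via the chamber ansatz, where the transition between the string and Lusztig parametrizations arises as the tropicalization of Lusztig's geometric relations. For the nested word $\ii_{A_r}$, this tropical map degenerates to the simple row-by-row reversal-with-differences appearing in the statement, which is why the formula takes such a clean form.
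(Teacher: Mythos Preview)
The paper does not give its own proof of this proposition; it is quoted verbatim from Berenstein--Zelevinsky \cite{BZ:96} and used as a black box to derive Corollary~\ref{cor:BN}. So there is no in-paper argument to compare against.

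As to your sketch itself: the inductive architecture is reasonable, and the identification of the bottom-row PBW root vectors with the roots $\alpha_\ell+\cdots+\alpha_r$ is correct. But two steps are asserted rather than proved. First, the claim that ``the inductive hypothesis immediately supplies the transition rule for the top $r-1$ rows'' presupposes that $\sigma_\ii$ is block-triangular for the row decomposition, i.e.\ that the first $N-r$ Lusztig coordinates of $b$ depend only on the first $N-r$ string coordinates. This is true, but it requires an argument about the compatibility of the Levi embedding $U_q^-(\mathfrak{sl}_r)\hookrightarrow U_q^-(\mathfrak{sl}_{r+1})$ with both the PBW filtration and Kashiwara's operators; you have not supplied one. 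Second, and more seriously, the sentence ``Reducing modulo $q\LL(\infty)$ and tracking the signatures \ldots\ yields the triangular identities $a_{r,k}=c_{r,1}+\cdots+c_{r,r-k+1}$'' is the entire content of the bottom-row case, and nothing is actually tracked. One cannot read off the $\widetilde e_i$-strings of a PBW monomial by inspection; the passage from $T_i$-twisted divided powers to crystal data is precisely the nontrivial input.

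Your final paragraph effectively concedes this by invoking the Berenstein--Zelevinsky chamber ansatz and its tropicalization. But that \emph{is} the proof in \cite{BZ:96}; citing it after an unfinished direct calculation leaves the proposal as an outline of a strategy rather than an independent argument. If you want a self-contained proof along your inductive lines, the honest route is to prove the bottom-row identity directly in the tableau model (where the string data are visible) and then appeal to the known bijection with Lusztig data for a single $A_r$-string---or simply cite \cite{BZ:96} as the paper does.
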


\begin{cor} [\cite{BN:10, KL:11}] \label{cor:BN}  Let $\ii \in R(w_\circ)$.
\begin{enumerate} 
\item The number of circled entries in a BZL path is the same as the number of zero entries in the corresponding Lusztig parametrization.
\item  We have 
\[
\prod_{\alpha \in \Phi^+} \frac{1-t^{-1}\zz^{\alpha}}{1-\zz^\alpha} =
\sum_{b\in\BB(\infty)} (1-t^{-1})^{\nc(\psi_\ii(b))}\zz^{-\wt(b)},
\] 
where $\nc(\psi_\ii(b))$ is the number of uncircled entries in $\psi_\ii(b)$.
\end{enumerate}
\end{cor}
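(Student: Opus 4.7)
The plan is to derive both parts of Corollary \ref{cor:BN} as immediate consequences of Proposition \ref{prop:Lus-to-BZL-A}, combined with Proposition \ref{prop:kl} and the Grojnowski--Lusztig identification $\GG(\infty)=\CB$ from \cite{GL:93}.

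For part (1), I would argue row by row. Fix the $j$th row $(a_{j,1},\dots,a_{j,j})$ of a BZL path and set $a_{j,j+1}:=0$ to encode the convention that entries outside the triangle vanish. Then the circling rule reads uniformly: $a_{j,\ell}$ is circled if and only if $a_{j,\ell}=a_{j,\ell+1}$ for each $\ell=1,\dots,j$ (the case $\ell=j$ is exactly the statement that the rightmost entry is circled when it equals zero). On the other hand, Proposition \ref{prop:Lus-to-BZL-A} identifies the corresponding block of the Lusztig parametrization as $(a_{j,j},\,a_{j,j-1}-a_{j,j},\,\dots,\,a_{j,1}-a_{j,2})$, whose entries are precisely the differences $a_{j,\ell}-a_{j,\ell+1}$ for $\ell=j,j-1,\dots,1$. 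Each such difference vanishes if and only if the corresponding $a_{j,\ell}$ is circled, so within row $j$ there is an entry-by-entry bijection between circled positions and zero positions. Summing over $j=1,\dots,r$ proves part (1).

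For part (2), I would apply Proposition \ref{prop:kl} with $\ii=\ii_{A_r}$ to write
\[
\prod_{\alpha\in\Phi^+}\frac{1-t^{-1}\zz^\alpha}{1-\zz^\alpha}=\sum_{b'\in\CB}(1-t^{-1})^{\nz(\phi_\ii(b'))}\zz^{-\wt(b')}.
\]
The Grojnowski--Lusztig result provides a weight-preserving bijection $\BB(\infty)\longrightarrow\CB$, $b\mapsto\GG(b)$, under which the composition $\sigma_\ii\circ\psi_\ii$ recovers the Lusztig parametrization; that is, $\phi_\ii(\GG(b))=\sigma_\ii(\psi_\ii(b))$ by the very statement of Proposition \ref{prop:Lus-to-BZL-A}. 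Combined with part (1) this gives $\nz(\phi_\ii(\GG(b)))=\nc(\psi_\ii(b))$, and reindexing the sum as one over $\BB(\infty)$ produces the stated identity.

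I expect the main (mild) obstacle to be keeping the indexing conventions straight, in particular the treatment of the rightmost entry of each row as being compared against an implicit zero, so that both the circling rule and the difference formula line up without an off-by-one discrepancy. Once the convention $a_{j,j+1}=0$ is adopted, the bijection is immediate and the rest is pure bookkeeping.
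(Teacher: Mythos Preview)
Your proposal is correct and is precisely the argument the paper has in mind: the text states that Proposition~\ref{prop:Lus-to-BZL-A} ``immediately implies'' Corollary~\ref{cor:BN}, and your row-by-row identification of circled entries with vanishing successive differences (using the convention $a_{j,j+1}=0$) together with Proposition~\ref{prop:kl} makes this implication explicit. There is nothing missing; you have simply unpacked what the paper leaves implicit.
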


\vskip 0.8 cm

%%%%%%%%%%%%%%%%%%%%%%%
%%%%%%%%%%%%%%%%%%%%%%%
%%%%%%%%%%%%%%%%%%%%%%%
%%%%%%%%%%%%%%%%%%%%%%%
\section{A combinatorial realization of $\BB(\infty)$}\label{sec:Binfinity-tableaux}

This section is a summary of the results for type $A$ from \cite{HL:08}. Recall that a tableaux $b$ is semistandard if entries are weakly increasing in rows and strictly increasing in columns. Hong and H. Lee define a tableau $b$ to be {\it marginally large} if, for all $1 \le i \le r$, the number of $i$-boxes in the $i$th row of $b$ is greater than the number of all boxes in the $(i+1)$st row by exactly one.

We define $\mathcal{T}(\infty)$ to be the set of tableaux $b$ satisfying the following conditions.
\begin{enumerate}
\item Entries in $b$ come from the alphabet $\{ 1 \prec 2 \prec \cdots \prec r+1\}$.
\item $b$ is semistandard and consists of $r$ rows.
\item For $1 \le j \le r$, the $j$th row of the leftmost column of $b$ is a $j$-box.
\item $b$ is marginally large.
\end{enumerate}

To obtain the crystal structure of $\mathcal{T}(\infty)$, it remains to describe how the Kashiwara operators act on tableaux in $\mathcal{T}(\infty)$. The main difference between this procedure and the procedure to compute the Kashiwara operators in a finite crystal is that we require each vertex to be a marginally large tableaux, so the shape of the tableaux varies as one moves down the crystal. Indeed, to calculate $\widetilde f_ib$, $i\in I$, for some $b\in \BB(\infty)$, we apply the following procedure.
\begin{enumerate}
\item Apply $\widetilde f_i$ to $b$ using the $i$-signature of $b$ as usual.
\item If the result is marginally large, then we are done. If not, it is the case that $\widetilde f_i$ is applied to the rightmost $i$-box in the $i$th row. Insert one column consisting of $i$ rows to the left of the box $\widetilde f_i$ acted on. This new column should have a $k$-box in the $k$th row, for $1 \le k \le i$.
\end{enumerate}
Similarly, to calculate $\widetilde e_ib$, one does the following.
\begin{enumerate}
\item Apply $\widetilde e_i$ to $b$ using the $i$-signature of $b$ as usual.
\item If the result is marginally large or zero, then we are done. If not, it is the case that $\widetilde e_i$ is applied to the box to the right of the rightmost $i$-box in the $i$th row. Remove the column containing the changed box, which is a column of $i$ rows having a $k$-box in the $k$th row, for $1 \le k \le i$.
\end{enumerate}

\begin{prop}[\cite{HL:08}]
We have $\mathcal{T}(\infty) \cong \BB(\infty)$ as crystals.
\end{prop}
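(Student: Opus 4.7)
The plan is to establish $\mathcal{T}(\infty) \cong \BB(\infty)$ by exhibiting $\mathcal{T}(\infty)$ as a direct limit of the Kashiwara--Nakashima semistandard tableau realizations of $\BB(\lambda)$ and invoking Kashiwara's limit description of $\BB(\infty)$.

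First, I would identify the unique candidate for the highest weight element: $b_\infty \in \mathcal{T}(\infty)$ is the minimal marginally large tableau, which has exactly $r - i + 1$ entries in row $i$, all equal to $i$, for $1 \le i \le r$. Using the embedding into $\BB(\Lambda_1)^{\otimes m}$ and the signature rule, one verifies that $\widetilde e_i b_\infty = 0$ for every $i \in I$ and $\wt(b_\infty) = 0$. This matches the defining properties of the highest weight vector of $\BB(\infty)$.

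Second, for each dominant integral weight $\lambda$, I would define an injective crystal map $\iota_\lambda \colon \BB(\lambda) \hookrightarrow \mathcal{T}(\infty)$ by padding a Kashiwara--Nakashima tableau of shape associated to $\lambda$ with the canonical columns of the form $(1, 2, \ldots, k)$ on the left so as to restore marginal largeness. The key observation is that each such prepended column, when expanded into a tensor of fundamental crystal elements, contributes only cancelling $+-$ pairs to every $i$-signature; consequently the Kashiwara operators acting on $\iota_\lambda(T)$ are computed by the same rule as on $T$ itself, whenever they do not annihilate the source in $\BB(\lambda)$.

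Third, I would show that every element of $\mathcal{T}(\infty)$ lies in $\iota_\lambda(\BB(\lambda))$ for all sufficiently large $\lambda$, and that the $\iota_\lambda$'s are compatible with the standard transition maps $\BB(\lambda) \otimes T_{-\lambda} \hookrightarrow \BB(\mu) \otimes T_{-\mu}$ for $\mu - \lambda$ dominant. This identifies $\mathcal{T}(\infty)$ with $\varinjlim \BB(\lambda) \otimes T_{-\lambda}$, which is $\BB(\infty)$ by Kashiwara's limit theorem.

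The main obstacle lies in matching the column-insertion clause in the definition of $\widetilde f_i$ on $\mathcal{T}(\infty)$: when applying $\widetilde f_i$ would produce a non-marginally-large tableau, the rule prescribes prepending a column $(1, 2, \ldots, i)$ to the left of the affected box. One must verify by a careful signature-rule argument that this insertion produces precisely the $+-$ cancellations required to match the passage from $\iota_\lambda(\BB(\lambda))$ to $\iota_\mu(\BB(\mu))$ for a suitable larger $\mu$, so that the crystal action defined directly on $\mathcal{T}(\infty)$ is consistent, without ambiguity, with the crystal structure on the direct limit.
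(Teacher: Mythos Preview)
The paper does not supply its own proof of this proposition: it is stated with a citation to Hong--Lee \cite{HL:08} and used as a black box. There is therefore nothing in the present paper to compare your argument against.

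That said, your outline is a reasonable reconstruction of the kind of argument one finds in the original reference. Hong and Lee work somewhat differently in detail: rather than packaging everything as a direct limit $\varinjlim \BB(\lambda)\otimes T_{-\lambda}$, they verify directly that $\mathcal{T}(\infty)$ carries an abstract crystal structure satisfying the characterizing properties of $\BB(\infty)$ (in particular, via Kashiwara's embedding into $\BB_i \otimes \BB(\infty)$), and then invoke uniqueness. Your limit formulation is morally equivalent, and the key technical point you flag---that the prepended staircase columns contribute only cancelling $+-$ pairs to every $i$-signature, so that the operators on $\mathcal{T}(\infty)$ agree with those on the embedded $\BB(\lambda)$---is exactly the crux of either approach. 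The column-insertion clause in the definition of $\widetilde f_i$ on $\mathcal{T}(\infty)$ is precisely what encodes the transition to a larger $\lambda$, as you note in your final paragraph; making this precise is the only nontrivial step, and you have correctly identified it.
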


\begin{ex}
For $r=3$, the elements of $\mathcal{T}(\infty)$ all have the form
\[
b = \begin{tikzpicture}[baseline]
\matrix [matrix of math nodes,column sep=-.4, row sep=-.4] 
 {
	\node[draw,fill=gray!25]{1}; & 
	\node[draw,fill=gray!25]{1 \cdots 1}; & 
	\node[draw,fill=gray!25]{1}; & 
	\node[draw,fill=gray!25]{1\cdots 1}; & 
	\node[draw,fill=gray!25]{1\cdots1}; & 
	\node[draw,fill=gray!25]{1}; & 
	\node[draw]{2\cdots 2}; & 
	\node[draw]{3\cdots 3}; & 
	\node[draw]{4\cdots 4};\\
	\node[draw,fill=gray!25]{2}; & 
	\node[draw,fill=gray!25]{2\cdots 2}; & 
	\node[draw,fill=gray!25]{2}; &
	\node[draw]{3 \cdots 3}; & 
	\node[draw]{4\cdots 4}; \\
  	\node[draw,fill=gray!25]{3}; & 
  	\node[draw]{4\cdots 4}; \\
 };
\end{tikzpicture},
\]
where the shaded parts are the required parts and the unshaded parts are variable. In particular, the unique element of weight zero in this crystal is
\[
b_\infty = \begin{tikzpicture}[baseline]
\matrix [matrix of math nodes,column sep=-.4, row sep=-.4] 
 {
	\node[draw,fill=gray!25]{1}; & 
	\node[draw,fill=gray!25]{1}; &
	\node[draw,fill=gray!25]{1}; \\
  	\node[draw,fill=gray!25]{2}; & 
  	\node[draw,fill=gray!25]{2};  \\
  	\node[draw,fill=gray!25]{3};  \\
 };
\end{tikzpicture}.
\]
\end{ex}

Following Bump and Nakasuji in \cite{BN:10}, we wish to suppress the required columns from the tableaux and only include the variable parts. This convention will save space, making drawing the graphs easier and it will help make the $k$-segments, to be defined later, stand out. We will call this modification of $b\in \mathcal{T}(\infty)$ the {\it reduced form} of $b$, and denote it by $b^\sharp$. For example, with $r=3$, we have
\[
\left(\,
\begin{tikzpicture}[baseline]
\matrix [matrix of math nodes,column sep=-.4, row sep=-.4,text height=7pt] 
 {
	\node[draw,fill=gray!25]{1}; & 
	\node[draw,fill=gray!25]{1}; &
	\node[draw,fill=gray!25]{1}; \\
  	\node[draw,fill=gray!25]{2}; & 
  	\node[draw,fill=gray!25]{2}; \\
  	\node[draw,fill=gray!25]{3}; \\
 };
\end{tikzpicture}
\,\right)^\sharp = 
\begin{tikzpicture}[baseline]
\matrix [matrix of math nodes,column sep=-.4, row sep=-.4,text height=7pt] 
 {
 	\node[draw]{*};  \\
  	\node[draw]{*};  \\
  	\node[draw]{*};  \\
 };
\end{tikzpicture}, \] 
\[
\left(\,
\begin{tikzpicture}[baseline]
\matrix [matrix of math nodes,column sep=-.4, row sep=-.4,text height=7pt] 
 {
	\node[draw,fill=gray!25]{1}; & 
	\node[draw,fill=gray!25]{1}; &
	\node[draw,fill=gray!25]{1}; &
	\node[draw,fill=gray!25]{1}; &
	\node[draw,fill=gray!25]{1}; &	
	\node[draw]{2};  \\
  	\node[draw,fill=gray!25]{2}; & 
	\node[draw,fill=gray!25]{2}; &
	\node[draw,fill=gray!25]{2}; &
	\node[draw,fill=gray!25]{2}; \\ 
	\node[draw,fill=gray!25]{3}; &
	\node[draw]{4}; &
  	\node[draw]{4}; \\
 };
\end{tikzpicture}
\,\right)^\sharp = 
\begin{tikzpicture}[baseline]
\matrix [matrix of math nodes,column sep=-.4, row sep=-.4,text height=7pt] 
 {
 	\node[draw]{2};  \\
  	\node[draw]{*};  \\
  	\node[draw]{4}; & \node[draw]{4}; \\
 };
\end{tikzpicture},
\]
\[
\left(\,
\begin{tikzpicture}[baseline]
\matrix [matrix of math nodes,column sep=-.4, row sep=-.4,text height=7pt] 
 {
	\node[draw,fill=gray!25]{1}; &
	\node[draw,fill=gray!25]{1}; &
	\node[draw,fill=gray!25]{1}; &
	\node[draw,fill=gray!25]{1}; &
	\node[draw,fill=gray!25]{1}; &
	\node[draw,fill=gray!25]{1}; &	
	\node[draw]{3}; \\
  	\node[draw,fill=gray!25]{2}; & 
	\node[draw,fill=gray!25]{2}; &
	\node[draw,fill=gray!25]{2}; &
	\node[draw]{3}; &	
	\node[draw]{3}; \\ 
	\node[draw,fill=gray!25]{3}; &
	\node[draw]{4}; \\
 };
\end{tikzpicture}
\,\right)^\sharp = 
\begin{tikzpicture}[baseline]
\matrix [matrix of math nodes,column sep=-.4, row sep=-.4,text height=7pt] 
 {
    \node[draw]{3}; \\
    \node[draw]{3}; &
    \node[draw]{3}; \\
    \node[draw]{4}; \\
 };
\end{tikzpicture},
\]
where $*$ should be considered as void. In particular, the resulting shape need not be a Young diagram. Denote by $\mathcal{T}(\infty)^\sharp$ the set of all reduced forms of $b\in \mathcal{T}(\infty)$.

\vskip 0.8 cm

%%%%%%%%%%%%%%%%
%%%%%%%%%%%%%%%%
%%%%%%%%%%%%%%%%
%%%%%%%%%%%%%%%%
\section{Main result}\label{sec:typeA}

In this section, we state and prove our main result. Our description of the coefficients in the sum will rely on certain patterns of boxes in a Young tableau.

\begin{dfn}
Let $b\in \mathcal{T}(\infty)$. Define a {\it $k$-segment}, $2\le k \le r+1$, to be part of a row from $b$ of the form
\[
\begin{tikzpicture}[baseline=-5]
\matrix [matrix of math nodes,nodes=draw,text height=8pt]
 { k & \cdots & k \\};
\end{tikzpicture}
\]
Moreover, we do not consider the required collection of $k$-boxes beginning the $k$th row of $b$ to be a $k$-segment; that is, we only consider $k$-boxes that appear in $b^\sharp$. Define $\seg_k(b)$ to be the number of $k$-segments in $b$, and let
\begin{equation}\label{eq:cb}
\seg(b) := \sum_{k=2}^{r+1} \seg_k(b).
\end{equation}
We also say a $k$-segment has {\it length} $m$ if the $k$-segment consists of $m$ boxes.
\end{dfn}
According to the definition, there are no $1$-segments, and a $k$-segment can only occur in rows $1$ through $k-1$. With this definition, we now state: 

\begin{thm}\label{thm:main-A}
Let $\Phi$ be the root system of $\mathfrak{sl}_{r+1}$ and let $\mathcal{T}(\infty)$ be the set of marginally large tableaux defined above. Then
\begin{equation}\label{eq:main-A}
\prod_{\alpha\in\Phi^+} \frac{1-t^{-1}\bm z^{\alpha}}{1-\bm z^{\alpha}} = \sum_{b\in \mathcal{T}(\infty)} (1-t^{-1})^{\seg(b)} \bm z^{-\wt(b)}.
\end{equation}
\end{thm}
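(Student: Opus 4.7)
The plan is to reduce Theorem \ref{thm:main-A} to Corollary \ref{cor:BN} by identifying $\seg(b)$ with $\nc(\psi_{\ii_{A_r}}(b))$ under the crystal isomorphism $\mathcal{T}(\infty) \cong \BB(\infty)$, which preserves weights. For $b \in \mathcal{T}(\infty)$, let $x_{i,k}(b)$ denote the number of $k$-boxes in row $i$ of the reduced form $b^\sharp$; since rows are weakly increasing, these boxes always form a single $k$-segment when nonempty, so
\[
\seg(b) = \#\{(i,k) : 1 \le i < k \le r+1,\ x_{i,k}(b) > 0\}.
\]
Thus it suffices to prove that for each $b$, the number of uncircled entries in $\psi_{\ii_{A_r}}(b)$ equals $\#\{(i,k) : x_{i,k}(b) > 0\}$.

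The main technical step is the explicit formula
\[
a_{j,m} = \sum_{i=1}^{j-m+1} x_{i,j+1}(b), \qquad 1 \le m \le j \le r,
\]
for the entries of $\psi_{\ii_{A_r}}(b) = (a_{j,m})$. Granting this, a telescoping difference gives $a_{j,m} - a_{j,m+1} = x_{j-m+1,j+1}(b)$ (with the convention $a_{j,j+1} := 0$), and the circling rule declares $a_{j,m}$ circled precisely when this difference vanishes. Consequently the uncircled entries of $\psi_{\ii_{A_r}}(b)$ are in natural bijection with the pairs $(i,k)$ satisfying $x_{i,k}(b) > 0$, so $\nc(\psi_{\ii_{A_r}}(b)) = \seg(b)$. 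Equivalently, applying Proposition \ref{prop:Lus-to-BZL-A}, the formula becomes the clean assertion
\[
\phi_{\ii_{A_r}}(b) = (x_{1,2};\, x_{1,3}, x_{2,3};\, \ldots;\, x_{1,r+1}, \ldots, x_{r,r+1}),
\]
so the theorem alternatively follows directly from Proposition \ref{prop:kl}.

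To prove the formula I would compute the BZL path one block of $\ii_{A_r}$ at a time, maintaining the invariant that after exhausting block $j$ the running tableau has had all $(j+1)$-boxes absorbed into its required part, while the parameters $x_{i,k}$ with $k > j+1$ are unchanged. For the first step of block $j$, applying the signature rule to the current marginally large tableau (read right-to-left by columns) and running the standard $+-$ cancellations shows that the surviving $-$ signs correspond bijectively to the $(j+1)$-boxes lying in rows $1, \ldots, j$, whence $a_{j,1} = \sum_{i=1}^{j} x_{i,j+1}(b)$; each application of $\widetilde{e}_j$ converts the leftmost surviving $(j+1)$-box into a $j$-box, triggering a column deletion exactly when the targeted box sits in row $j$. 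The subsequent applications of $\widetilde{e}_{j-1}, \widetilde{e}_{j-2}, \ldots, \widetilde{e}_1$ within block $j$ cascade the altered entries down the alphabet in the same pattern, producing the telescoping values $a_{j,m}$ for $m > 1$. The hard part is exactly this signature bookkeeping: because column deletions shift indices of subsequent columns, one must verify carefully that the rightmost surviving $-$ at each step really targets the expected box, and that the invariant on $x_{i,k}$ for $k > j+1$ is preserved through all the deletions; granted these checks, everything else is routine.
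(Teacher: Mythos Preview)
Your proposal is correct and follows essentially the same route as the paper: both reduce to the identity $\seg(b)=\nc(\psi_{\ii_{A_r}}(b))$ via Corollary~\ref{cor:BN}, and both establish it through the explicit formula $a_{j,m}=\sum_{i=1}^{j-m+1} x_{i,j+1}(b)$, proved by analyzing the BZL path one block at a time (the paper packages this block analysis as Lemmas~\ref{lem:Aboxes} and~\ref{lem:Apath} and records your formula afterward as a corollary). Your alternative observation that $\phi_{\ii_{A_r}}(b)=(x_{i,k})$ then allows a direct appeal to Proposition~\ref{prop:kl} is a nice streamlining, but it is equivalent to the paper's argument once Proposition~\ref{prop:Lus-to-BZL-A} is invoked.
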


\medskip

Before we present the proof of the theorem, we first give an example and two lemmas.

\begin{ex}\label{ex:r=2}
Let $r=2$. Then top part of $\mathcal{T}(\infty)$ is shown in Figure \ref{fig:b2} with corresponding coefficients shown in Figure \ref{fig:b2q}.

\begin{sidewaysfigure}[p]
\begin{tikzpicture}[yscale=2.2,xscale=2.7]
\node (000) at (0,0) {$\young(*,*)$};
\node (100) at (-1,-1) {$\young(2,*)$};
\node (010) at (1,-1) {$\young(*,3)$};
\node (200) at (-2,-2) {$\young(22,*)$};
\node (011) at (-.25,-2) {$\young(3,*)$};
\node (110) at (.25,-2) {$\young(2,3)$};
\node (020) at (2,-2) {$\young(*,33)$};
\node (300) at (-3,-3) {$\young(222,*)$};
\node (111) at (-1.9,-3) {$\young(23,*)$};
\node (210) at (-.75,-3) {$\young(22,3)$};
\node (021) at (.75,-3) {$\young(3,3)$};
\node (120) at (1.9,-3) {$\young(2,33)$};
\node (030) at (3,-3) {$\young(*,333)$};
\node (400) at (-3.2,-4) {$\young(2222,*)$};
\node (211) at (-2.2,-4) {$\young(223,*)$};
\node (022) at (-0.5,-4) {$\young(33,*)$};
\node (310) at (-1.7,-4) {$\young(222,3)$};
\node (121) at (0,-4) {$\young(23,3)$};
\node (220) at (0.5,-4) {$\young(22,33)$};
\node (031) at (1.7,-4) {$\young(3,33)$};
\node (130) at (2.2,-4) {$\young(2,333)$};
\node (040) at (3.2,-4) {$\young(*,3333)$};
\path[->,font=\scriptsize]
 (000) edge node[midway,fill=white]{$1$} (100)
 (000) edge node[midway,fill=white]{$2$} (010)
 (100) edge node[midway,fill=white]{$1$} (200)
 (100) edge node[midway,fill=white]{$2$} (011)
 (010) edge node[midway,fill=white]{$1$} (110)
 (010) edge node[midway,fill=white]{$2$} (020)
 (200) edge node[midway,fill=white]{$1$} (300)
 (200) edge node[midway,fill=white]{$2$} (111)
 (011) edge node[midway,fill=white]{$1$} (111)
 (011) edge node[near end,fill=white]{$2$} (021)
 (110) edge node[near end,fill=white]{$1$} (210)
 (110) edge node[midway,fill=white]{$2$} (120)
 (020) edge node[midway,fill=white]{$1$} (120)
 (020) edge node[midway,fill=white]{$2$} (030)
 (300) edge node[midway,fill=white]{$1$} (400)
 (300) edge node[midway,fill=white]{$2$} (211)
 (111) edge node[midway,fill=white]{$1$} (211)
 (111) edge node[near end,fill=white]{$2$} (022)
 (210) edge node[near end,fill=white]{$1$} (310)
 (210) edge node[midway,fill=white]{$2$} (121)
 (021) edge node[midway,fill=white]{$1$} (121)
 (021) edge node[near start,fill=white]{$2$} (031)
 (120) edge node[near start,fill=white]{$1$} (220)
 (120) edge node[near start,fill=white]{$2$} (130)
 (030) edge node[midway,fill=white]{$1$} (130)
 (030) edge node[midway,fill=white]{$2$} (040);
\end{tikzpicture}
\caption{The top part of $\mathcal{T}(\infty)^\sharp$ when $r=2$.}
\label{fig:b2}
\end{sidewaysfigure}

\begin{sidewaysfigure}[p]
\begin{tikzpicture}[yscale=2.2,xscale=2.7]
\node (000) at (0,0) {$1$};
\node (100) at (-1,-1) {$1-t^{-1}$};
\node (010) at (1,-1) {$1-t^{-1}$};
\node (200) at (-2,-2) {$1-t^{-1}$};
\node (011) at (-.3,-2) {$1-t^{-1}$};
\node (110) at (.3,-2) {$(1-t^{-1})^2$};
\node (020) at (2,-2) {$1-t^{-1}$};
\node (300) at (-2.7,-3) {$1-t^{-1}$};
\node (111) at (-1.9,-3) {$(1-t^{-1})^2$};
\node (210) at (-.75,-3) {$(1-t^{-1})^2$};
\node (021) at (.75,-3) {$(1-t^{-1})^2$};
\node (120) at (1.9,-3) {$(1-t^{-1})^2$};
\node (030) at (2.7,-3) {$1-t^{-1}$};
\node (400) at (-3.2,-4) {$1-t^{-1}$};
\node (211) at (-2.35,-4) {$(1-t^{-1})^2$};
\node (022) at (-0.7,-4) {$1-t^{-1}$};
\node (310) at (-1.7,-4) {$(1-t^{-1})^2$};
\node (121) at (0,-4) {$(1-t^{-1})^3$};
\node (220) at (0.7,-4) {$(1-t^{-1})^2$};
\node (031) at (1.7,-4) {$(1-t^{-1})^2$};
\node (130) at (2.35,-4) {$(1-t^{-1})^2$};
\node (040) at (3.2,-4) {$1-t^{-1}$};
\path[->,font=\scriptsize]
 (000) edge node[midway,fill=white]{$1$} (100)
 (000) edge node[midway,fill=white]{$2$} (010)
 (100) edge node[midway,fill=white]{$1$} (200)
 (100) edge node[midway,fill=white]{$2$} (011)
 (010) edge node[midway,fill=white]{$1$} (110)
 (010) edge node[midway,fill=white]{$2$} (020)
 (200) edge node[midway,fill=white]{$1$} (300)
 (200) edge node[midway,fill=white]{$2$} (111)
 (011) edge node[midway,fill=white]{$1$} (111)
 (011) edge node[near end,fill=white]{$2$} (021)
 (110) edge node[near end,fill=white]{$1$} (210)
 (110) edge node[midway,fill=white]{$2$} (120)
 (020) edge node[midway,fill=white]{$1$} (120)
 (020) edge node[midway,fill=white]{$2$} (030)
 (300) edge node[midway,fill=white]{$1$} (400)
 (300) edge node[midway,fill=white]{$2$} (211)
 (111) edge node[midway,fill=white]{$1$} (211)
 (111) edge node[near end,fill=white]{$2$} (022)
 (210) edge node[near end,fill=white]{$1$} (310)
 (210) edge node[midway,fill=white]{$2$} (121)
 (021) edge node[midway,fill=white]{$1$} (121)
 (021) edge node[near start,fill=white]{$2$} (031)
 (120) edge node[near start,fill=white]{$1$} (220)
 (120) edge node[near start,fill=white]{$2$} (130)
 (030) edge node[midway,fill=white]{$1$} (130)
 (030) edge node[midway,fill=white]{$2$} (040);
\end{tikzpicture}
\caption{The coefficients for the top part of $\mathcal{T}(\infty)$ when $r=2$.}
\label{fig:b2q}
\end{sidewaysfigure}

Let us consider the element
\[ b= \begin{tikzpicture}[baseline]
\matrix [matrix of math nodes,column sep=-.4, row sep=-.4,text height=7pt] 
 {
	\node[draw,fill=gray!25]{1}; &
	\node[draw,fill=gray!25]{1}; &
	\node[draw,fill=gray!25]{1}; & 
	\node[draw]{2}; &
	\node[draw]{3}; \\
  	\node[draw,fill=gray!25]{2}; &
  	\node[draw]{3};  \\
 };
\end{tikzpicture} \qquad \text{ with } \qquad
b^\sharp = \young(23,3)\ .
\]
There is one $2$-segment in the first row and one $3$-segment in each of the first and second rows, so $\seg_2(b) = 1$ and $\seg_3(b) = 2$. Thus $\seg(b) = 1 + 2 = 3$ and the coefficient corresponding to $b$ is $(1-t^{-1})^3$. Notice that the decorated BZL path of $b$ is $\psi_\ii(b) = (1;2,1)$ with no circle, which results in $(1-t^{-1})^3$.

Now consider
\[b= \begin{tikzpicture}[baseline]
\matrix [matrix of math nodes,column sep=-.4, row sep=-.4,text height=7pt] 
 {
	\node[draw,fill=gray!25]{1}; &
	\node[draw,fill=gray!25]{1}; &
	\node[draw]{3}; \\
  	\node[draw,fill=gray!25]{2}; \\
 };
\end{tikzpicture} \qquad \text{ with } \qquad
(b')^\sharp = \young(3,*)\ .
\]
Since there is no $2$-segment, we have $\seg_2(b') = 0$. There is, however, one $3$-segment in the first row, so $\seg_3(b') = \seg(b') = 1$, and coefficient associated to $b'$ is $1-t^{-1}$. Using the BZL path, we have $\psi_\ii(b') = (\Circled{0};\Circled{1},1)$, so the contribution is $1-t^{-1}$.
\end{ex}

\begin{lemma}\label{lem:Aboxes}
Let $b\in \mathcal{T}(\infty)$ and $2 \le k \le r+1$. Suppose there are no $(k-1)$-segments in $b$. If $m$ is the maximal integer such that $\widetilde e_{k-1}^m b \neq 0$, then $m$ is the number of $k$-boxes comprising all $k$-segments of $b$.
\end{lemma}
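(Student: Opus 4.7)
The plan is to compute $m = \varepsilon_{k-1}(b)$ by applying the signature rule to the far-eastern column reading of $b$ (columns right-to-left, rows top-to-bottom within each column), and then to show that the unpaired minus signs are in natural bijection with the $k$-boxes making up the $k$-segments of $b$. Only the letters $k-1$ and $k$ contribute to the $(k-1)$-signature: each $(k-1)$-box gives a $+$ and each $k$-box gives a $-$. Write $n_{j,\ell}$ for the number of $\ell$-boxes in row $j$ of $b$. The hypothesis that $b$ has no $(k-1)$-segments means that the only $(k-1)$-boxes in $b$ are the required ones, so they occupy columns $1,\dots,n_{k-1,k-1}$ of row $k-1$.

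The key geometric step is to show that every $k$-box belonging to a $k$-segment lies in a column $c > n_{k-1,k-1}$, so that in the far-eastern reading it is encountered \emph{before} any $+$ sign. For $j = k-1$ this is immediate, since the $k$-extras of row $k-1$ begin in column $n_{k-1,k-1}+1$. For $j \le k-2$, the no-$(k-1)$-segment hypothesis means row $j$ has no $(k-1)$-boxes, so the $k$-boxes of row $j$ occupy columns $> n_{j,j}+n_{j,j+1}+\cdots+n_{j,k-2} \ge n_{j,j}$. Iterating the marginal-largeness identity $n_{i,i} = |\text{row } i{+}1|+1 > n_{i+1,i+1}$ from $i = j$ up to $i = k-2$ gives $n_{j,j} > n_{k-1,k-1}$, as required. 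This column-positioning argument, which combines marginal largeness with the absence of $(k-1)$-segments, is the only real obstacle; everything downstream is bookkeeping.

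With this in hand, the reading of $b$ splits the $(k-1)$-signature into three consecutive blocks. In the rightmost block (columns $c > n_{k-1,k-1}$) the only relevant entries are the $k$-boxes forming the $k$-segments, contributing a run of $\sum_{j=1}^{k-1} n_{j,k}$ consecutive $-$'s. In the middle block ($n_{k,k} < c \le n_{k-1,k-1}$, vacuous when $k = r+1$), only the required $(k-1)$ of row $k-1$ is relevant, contributing a run of $+$'s. In the leftmost block ($c \le n_{k,k}$), each column contributes, among the relevant letters, the required $(k-1)$ of row $k-1$ directly followed by the required $k$ of row $k$, giving a matching $+-$ pair within the column; the remaining rows are irrelevant (rows below $k$ have entries $\ge k+1$, and rows above $k-1$ have neither $k-1$ nor $k$ in these small columns by the column bound just proved). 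The $+-$ pairs in the leftmost block cancel inside their columns, the $+$'s of the middle block survive as a trailing stack (contributing to $\varphi_{k-1}$), and each $-$ from the rightmost block has no earlier $+$ to pair with, so all $\sum_{j=1}^{k-1} n_{j,k}$ of them remain uncanceled. Therefore $m = \sum_{j=1}^{k-1} n_{j,k}$, which is exactly the total number of $k$-boxes in the $k$-segments of $b$.
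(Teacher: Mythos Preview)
Your proof is correct and follows the same signature-rule approach as the paper, supplying the column-position details that the paper's proof only asserts; in fact your analysis shows the reduced $(k-1)$-signature can end in several $+$'s (one for each column $n_{k,k}<c\le n_{k-1,k-1}$), not just the single $+$ the paper's sketch suggests, though this does not affect the count of unmatched $-$'s. One small slip: when $k=r+1$ it is the \emph{leftmost} block ($c\le n_{r+1,r+1}=0$) that is vacuous, not the middle one---the middle block then reduces to the single column $c=1=n_{r,r}$---but this has no bearing on the argument.
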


\begin{proof}
By assumption, there are no $(k-1)$-segments in the tableau $b$. Thus, by the marginal largeness of the tableau $b\in \mathcal{T}(\infty)$, the $k$-signature (with all $(+,-)$-pairs removed) of $b$ has the form
\[
(-,-,\cdots,-,+).
\]
In particular, the sequence of $-$'s comes from $k$-segments in $b$, while $+$ comes from the $k-1$ in the mandatory $(k-1)$st row of $b$. Suppose there are $m$ such minus signs. By the definition of the signature rule, we have $\widetilde e_{k-1}^mb \in \mathcal{T}(\infty)$ but $\widetilde e_{k-1}^{m+1}b =0$. The claim is proved.
\end{proof} 

\begin{lemma}\label{lem:Apath}
Let $b\in \mathcal{T}(\infty)$ and  $\psi_\ii(b)=(a_{1,1}; a_{2,1}, a_{2,2}; \dots; a_{r,1}, \dots, a_{r,r})$. Suppose that $2 \le k \le r+1$. Then the sequence of operators
\[
\widetilde e_1^{a_{k-1,k-1}}\widetilde e_2^{a_{k-1,k-2}} \cdots \widetilde e_{k-1}^{a_{k-1,1}} \cdots \widetilde e_1^{a_{2,2}} \widetilde e_2^{a_{2,1}} \widetilde e_1^{a_{1,1}}
\] 
removes any and all $j$-segments from $b$, with $2 \le j \le k$. 
\end{lemma}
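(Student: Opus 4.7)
The plan is to proceed by induction on $k$. For the base case $k=2$, we apply only $\widetilde e_1^{a_{1,1}}$. Because $1$-segments do not exist, Lemma~\ref{lem:Aboxes} identifies $a_{1,1}$ as the total number of $2$-boxes in all $2$-segments of $b$, all of which lie in row $1$. A direct analysis of the $1$-signature shows that after $+/-$ cancellation the unpaired $-$'s correspond exactly to these variable $2$-boxes; consequently each application of $\widetilde e_1$ converts the leftmost remaining $2$ in row $1$ to a $1$, and since the affected box sits at position $m_{11}+1$ (beyond the length of row $2$ by marginal largeness), the column-removal clause of the Hong--Lee procedure triggers and the relevant $2$-segment shrinks by one. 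After $a_{1,1}$ iterations every $2$-segment has been removed.

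For the inductive step, assume the lemma for $k$, and let $b^{(k)}$ denote the tableau obtained from $b$ after applying the first $k-1$ blocks of operators; by hypothesis, $b^{(k)}$ has no $j$-segments for $2 \le j \le k$. It suffices to show that applying block $k$---the operators $\widetilde e_k^{a_{k,1}}, \widetilde e_{k-1}^{a_{k,2}}, \ldots, \widetilde e_1^{a_{k,k}}$ in that order---yields a tableau free of $j$-segments for $2 \le j \le k+1$. We use a secondary induction on the sub-block index $\ell \in \{0,1,\ldots,k\}$, where the $\ell$-th operator of block $k$ is $\widetilde e_{k-\ell+1}^{a_{k,\ell}}$, with the following invariant: after the first $\ell$ operators of block $k$ are applied, the only surviving $j$-segments with $2 \le j \le k+1$ are $(k-\ell+1)$-segments. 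At $\ell=0$ this is exactly the information on $b^{(k)}$; at $\ell=k$ the only allowed segments are $1$-segments, which do not exist, so the desired conclusion follows.

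To execute the step $\ell \to \ell+1$, observe that by the invariant the current tableau has no $(k-\ell)$-segments, so Lemma~\ref{lem:Aboxes} gives $a_{k,\ell+1}$ as the total number of $(k-\ell+1)$-boxes in its $(k-\ell+1)$-segments. Each application of $\widetilde e_{k-\ell}$ converts one such $(k-\ell+1)$-box into a $(k-\ell)$-box. Mimicking the base-case signature argument, the targeted box is the leftmost $(k-\ell+1)$-box in the $(k-\ell+1)$-segment of greatest row index; if that row is $k-\ell$, then the affected column has entries $(1,2,\ldots,k-\ell-1,k-\ell)$---its top $k-\ell-1$ entries lying in the mandatory initial portion of rows $1,\ldots,k-\ell-1$, and row $k-\ell+1$ being absent at this column by the marginal-largeness chain---so the column-removal clause triggers and the box is absorbed; otherwise the targeted box lies in some row $j<k-\ell$ and the conversion happens in situ, extending or initiating a $(k-\ell)$-segment in row $j$. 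After $a_{k,\ell+1}$ applications, the $(k-\ell+1)$-segment in row $k-\ell$ has been entirely absorbed and those in rows $j<k-\ell$ have been converted wholesale to $(k-\ell)$-segments, so the invariant passes to step $\ell+1$. The main technical obstacle is the signature-rule computation needed to justify the claimed targeting order and the precise mandatory form of the removed columns; this reduces to a careful enumeration exploiting the inequalities $m_{j,j} \ge m_{j+1,j+1}+1$ (strengthened in the presence of $(k-\ell+1)$-segments) that constrain the intermediate tableau under the invariant.
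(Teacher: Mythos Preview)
Your argument is correct and follows essentially the same inductive strategy as the paper's proof: induction on $k$, with the inductive step handled by processing the operators $\widetilde e_{k-1}^{a_{k-1,1}},\ldots,\widetilde e_1^{a_{k-1,k-1}}$ one at a time, invoking Lemma~\ref{lem:Aboxes} at each stage to see that the current $(k-\ell+1)$-segments are either absorbed (in row $k-\ell$) or converted to $(k-\ell)$-segments. The paper compresses your secondary induction on $\ell$ into the single sentence ``Applying this same argument to each of $\widetilde e_{k-2}^{a_{k-1,2}},\dots,\widetilde e_1^{a_{k-1,k-1}}$ consecutively,'' and does not spell out the signature-rule targeting or the column-removal verification that you sketch; your added detail is welcome but does not constitute a different route.
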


\begin{proof}
We proceed by induction on $k$.  First we assume that $k=2$. Notice that there is at most one $2$-segment in $b$, and it must occur in the first row. It is obvious from the definition of $\psi_\ii$ that $\widetilde e_1^{a_{1,1}}$ removes this $2$-segment.

Now suppose that for some $k \ge 2$, we have applied the sequence of operators to $b$:
\[
\widetilde e_1^{a_{k-2,k-2}} 
\cdots
\widetilde e_{k-2}^{a_{k-2,1}} 
\cdots
\widetilde e_1^{a_{2,2}} 
\widetilde e_2^{a_{2,1}} 
\widetilde e_1^{a_{1,1}} .
\]
Then, by the induction hypothesis, all $j$-segments for $2 \le j \le k-1$ are removed and we denote by $b'$ the resulting tableau. We apply to the tableau $b'$ the product of operators
\[
\widetilde e_1^{a_{k-1,k-1}}\widetilde e_2^{a_{k-1,k-2}} \cdots \widetilde
e_{k-1}^{a_{k-1,1}}.
\]
Since there are no $(k-1)$-segments in the tableaux $b'$, applying $\widetilde e_{k-1}^{a_{k-1,1}}$ will take any $k$-segment in the $(k-1)$st row completely out of the tableau and will take any $k$-segment not in the $(k-1)$st row to a $(k-1)$-segment by Lemma \ref{lem:Aboxes}. Applying this same argument to each of $\widetilde e_{k-2}^{a_{k-1,2}},\dots, \widetilde e_1^{a_{k-1,k-1}}$ consecutively, we prove the assertion of the lemma.
\end{proof}

\begin{proof}[Proof of Theorem \ref{thm:main-A}.]
Let $b \in \mathcal{T}(\infty)$. We will use the same notation $b$ to denote the
corresponding element in $\BB(\infty)$. In order to prove the theorem, we have only to show that $\seg(b)=\nc(\psi_\ii(b))$ by Corollary \ref{cor:BN}; that is, we need only to show that the number of all segments in $b$ is equal to the number of uncircled entries in $\psi_\ii(b)$. Recall that we may write $\psi_\ii(b)$ in a triangular array (\ref{eq:triangles}). Let $\nc_k(\psi_\ii(b))$ be the number of uncircled entries in the $k$th row of the triangular array. We will prove $\seg_k(b)=\nc_{k-1}(\psi_\ii(b))$ for each $k$. Then it will follow that $\seg(b)=\nc(\psi_\ii(b))$.

We first consider $2$-segments.  By definition, the $\widetilde e_1$ operator changes a $2$-box to a $1$-box. However, the only $2$-boxes that this will affect are boxes in a $2$-segment in the first row of $b$. With this observation, we apply $\widetilde e_1^{a_{1,1}}$ to the tableau $b$ where $a_{1,1}$ is the length of the $2$-segment. If $a_{1,1} =0$ (i.e., there is no $2$-segment), then we obtain a circle, but if $a_{1,1} >0$, then there is a $2$-segment and we do not get a circle. In both cases, $\seg_2(b)=\nc_1(\psi_\ii(b))$.

Now consider any $k$-segments in $b$, for $2 < k \le r+1$. By definition, any $k$-segment must appear between the first and $(k-1)$st row. Note that eliminating a $k$-segment from the tableaux $b$ occurs in the $(k-1)$st row of $\psi_\ii(b)$ according to Lemma \ref{lem:Apath}. If there are no $k$-segments anywhere in $b$, then the $(k-1)$st row of $\psi_\ii(b)$ will consist of $k-1$ zeros, each of which is circled. Thus $\seg_k(b) = 0$ (because there are no $k$-segments), which is exactly the number of uncircled entries in the $(k-1)$st row. On the other hand, if there is a $k$-segment in the $j$th row, where $1 \le j \le k-1$, then we have $\seg_k(b) \ge 1$. In particular, suppose that the length of the $k$-segment in the $j$th row is $m_1$. If there are no other $k$-segments anywhere in $b$, then $\widetilde e_{j}^{m_1} \cdots \widetilde e_{k-1}^{m_1}$ removes this $k$-segment entirely by Lemma \ref{lem:Apath}, so the $(k-1)$st row of $\psi_\ii(b)$ has the form
\[
(\underbrace{m_1,\dots,m_1}_{k-j\,\mathrm{times}},0,\dots,0).
\]
The only entry which is not circled is the last $m_1$, so $\seg_k(b)$ is again exactly the number of uncircled entries in that row; i.e., $\seg_k(b)=1=\nc_{k-1}(\psi_\ii(b))$.

Assume that there exists another $k$-segment in some row between $2$ and $j$, say in row $2 \le \ell < j$. Suppose that the $k$-segment in the $\ell$th row has length $m_2$. Then the $(k-1)$st row of $\psi_\ii(b)$ has the form
\[
(\underbrace{m_1+m_2,\dots,m_1+m_2}_{k-j
\,\mathrm{times}},\underbrace{m_2,\dots,m_2}_{j-\ell\,\mathrm{times}},0,\dots,0).
\]
In this case, $\seg_k(b) = 2$ and there are two uncircled entries in this row, so they match.

Continuing this way shows that $\seg_k(b)=\nc_{k-1}(\psi_\ii(b))$, which concludes the proof.
\end{proof}

From the above proof, we have obtained an interpretation of the string parametrization into information about the corresponding tableau:
\begin{cor}
Let $b\in \mathcal{T}(\infty)$ and  $\psi_\ii(b)=(a_{1,1}; a_{2,1}, a_{2,2}; \dots; a_{r,1}, \dots, a_{r,r})$. Then $a_{i,j}$ is the sum of lengths of $(i+1)$-segments in rows $1$ through $i-j+1$ of the tableau $b$.
\end{cor}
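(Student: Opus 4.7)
The plan is to read the corollary directly off the proof of Theorem \ref{thm:main-A}, which already tracks exactly how each entry of the $(k-1)$st row of $\psi_\ii(b)$ arises from the $k$-segments of $b$ when $k = i+1$. So the corollary is essentially a re-reading of what was already shown, and no new ideas beyond bookkeeping are required.

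Fix $i$ with $1 \le i \le r$, set $k = i + 1$, and let $b^{(0)}$ denote the tableau obtained from $b$ by applying the operators coming from the first $k - 2$ rows of $\psi_\ii(b)$. By Lemma \ref{lem:Apath}, every $j$-segment of $b$ with $2 \le j \le k - 1$ has been removed, while the $k$-segments of $b$ remain intact in their original rows and with their original lengths (the higher segments are irrelevant since $\widetilde e_1, \ldots, \widetilde e_{k-2}$ do not involve labels $\ge k+1$). I would then argue by induction on $\ell \ge 1$ that the tableau $b^{(\ell-1)}$ obtained after applying the prefix $\widetilde e_{k-1}^{a_{k-1,1}} \widetilde e_{k-2}^{a_{k-1,2}} \cdots \widetilde e_{k-\ell+1}^{a_{k-1,\ell-1}}$ to $b^{(0)}$ has the following property: its $(k-\ell+1)$-segments occur precisely in those rows $p \le k - \ell$ where $b$ originally had a $k$-segment, with the same lengths, and $b^{(\ell-1)}$ has no $j$-segments for $j \le k - \ell$. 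The base case $\ell = 1$ is the description of $b^{(0)}$ just given. The inductive step follows from the mechanism described in the proof of Lemma \ref{lem:Apath}: applying $\widetilde e_{k - \ell}^{a_{k-1, \ell}}$ removes the $(k-\ell+1)$-segment in row $k - \ell$ entirely and converts each remaining $(k-\ell+1)$-segment into a $(k-\ell)$-segment of the same length.

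Finally I would apply Lemma \ref{lem:Aboxes} to $b^{(\ell-1)}$ with its parameter ``$k$'' set to $k - \ell + 1$: the maximal exponent $m$ with $\widetilde e_{k-\ell}^{m} b^{(\ell-1)} \neq 0$ equals the total number of boxes in $(k-\ell+1)$-segments of $b^{(\ell-1)}$, which by the inductive description is the sum of lengths of the original $k$-segments of $b$ in rows $1, \ldots, k - \ell$. Since by definition $a_{k-1, \ell} = m$, substituting $i = k - 1$ and $j = \ell$ gives exactly the claimed formula $a_{i,j} = \sum (\text{lengths of }(i+1)\text{-segments in rows } 1, \ldots, i - j + 1)$. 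The main obstacle is really just the bookkeeping that segment lengths are preserved while labels decrement under successive $\widetilde e$-applications, but this is already the content of the proofs of Lemmas \ref{lem:Aboxes} and \ref{lem:Apath}, so the corollary requires no new machinery.
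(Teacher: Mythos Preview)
Your argument is correct and is precisely the explicit unpacking of what the paper means when it says the corollary follows ``from the above proof'' of Theorem~\ref{thm:main-A}: you invoke the same lemmas (Lemmas~\ref{lem:Aboxes} and~\ref{lem:Apath}) and the same inductive mechanism by which each $\widetilde e$-string in the $(k-1)$st row of the BZL triangle decrements segment labels while discarding the segment in the bottom admissible row. One cosmetic slip: your displayed prefix $\widetilde e_{k-1}^{a_{k-1,1}} \widetilde e_{k-2}^{a_{k-1,2}} \cdots \widetilde e_{k-\ell+1}^{a_{k-1,\ell-1}}$ is written in the reverse of the composition order used in Lemma~\ref{lem:Apath}, though your surrounding prose makes the intended order of application clear.
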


The following corollary will play an important role in the next section.

\begin{cor}\label{cor:seg-nz}
Let $b \in \mathcal{T}(\infty)$, and we denote by the same notation $b$ for the corresponding elements in $\BB(\infty)$ and $\CB$. Then we obtain
\[ \seg(b) = \nz(\phi_\ii(b)) = \nc(\psi_\ii(b)) . \]  
\end{cor}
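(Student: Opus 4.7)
The plan is to assemble this corollary directly from two ingredients already established in the excerpt, without any new combinatorics. The first equality $\seg(b) = \nc(\psi_\ii(b))$ is, in fact, exactly what was shown in the proof of Theorem~\ref{thm:main-A}: the row-by-row argument there established $\seg_k(b) = \nc_{k-1}(\psi_\ii(b))$ for each $k$ with $2 \le k \le r+1$, and summing over $k$ yields the identity $\seg(b) = \nc(\psi_\ii(b))$. So this half requires only a pointer back to that proof.

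For the equality $\nc(\psi_\ii(b)) = \nz(\phi_\ii(b))$, I would invoke Corollary~\ref{cor:BN}(1), which says that the number of circled entries in a BZL path coincides with the number of zero entries in the corresponding Lusztig parametrization; equivalently, the number of \emph{uncircled} entries in $\psi_\ii(b)$ equals the number of \emph{nonzero} entries in $\phi_\ii(b)$. Since the two parametrizations $\psi_\ii(b)$ and $\phi_\ii(b)$ both have $N = |\Phi^+|$ entries, taking complements of the circled/zero sets gives $\nc(\psi_\ii(b)) = \nz(\phi_\ii(b))$. This is an immediate bookkeeping step using Proposition~\ref{prop:Lus-to-BZL-A}, which explicitly relates the two sequences.

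Chaining these two equalities yields $\seg(b) = \nc(\psi_\ii(b)) = \nz(\phi_\ii(b))$. There is no real obstacle: the work was already done in the proof of Theorem~\ref{thm:main-A} and in Corollary~\ref{cor:BN}, and this corollary is simply the explicit statement of the three-way identification for the specific reduced word $\ii = \ii_{A_r}$. The only subtlety to mention is the implicit identification of $b \in \mathcal{T}(\infty)$ with elements of $\BB(\infty)$ and $\CB$, which is justified by the isomorphism $\mathcal{T}(\infty) \cong \BB(\infty)$ from Hong--Lee and by the Grojnowski--Lusztig identification $\GG(\infty) = \CB$ recalled earlier in Section~\ref{sec:bases}.
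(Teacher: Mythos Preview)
Your proposal is correct and follows exactly the paper's own proof: it invokes the identity $\seg(b)=\nc(\psi_\ii(b))$ established in the proof of Theorem~\ref{thm:main-A} and the identity $\nz(\phi_\ii(b))=\nc(\psi_\ii(b))$ from Corollary~\ref{cor:BN}. The extra remarks you add (the complement-counting justification and the identifications via Hong--Lee and Grojnowski--Lusztig) are fine elaborations but not needed beyond what the paper uses.
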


\begin{proof}
In the proof of Theorem \ref{thm:main-A}, we showed $\seg(b)=\nc(\psi_\ii(b))$. By Corollary \ref{cor:BN}, we have $\nz(\phi_\ii(b)) = \nc(\psi_\ii(b))$.
\end{proof}

\vskip 0.8 cm
%%%%%%%%%%%%%%%%%%%%%
%%%%%%%%%%%%%%%%%%%%%
%%%%%%%%%%%%%%%%%%%%%
\section{Connections to MV polytopes and quiver varieties}

In this section, we investigate connections of our results to other realizations of crystals. In particular, we will interpret the meaning of segments of a tableau into the MV polytope model and Kashiwara-Saito's geometric construction of crystals, respectively. In the beginning of each of the following subsections, we briefly review the theory of MV polytopes and geometric construction of crystals. We refer the reader to the papers \cite{Kam:07,Kam:10} and \cite{KS:97} for more details.

\subsection{MV polytopes}

We require the Bruhat order $\ge$ on the Weyl group. We recall that there is an order on $P^\vee$, which we will also denote by $\ge$, defined by $\mu \ge \nu$ if and only if $\mu - \nu$ is a sum of positive coroots. We will also need a twisted partial order $\ge_w$ ($w\in W$) on $P^\vee$ such that $\mu\ge_w\nu$ if and only if $w^{-1}\cdot\mu \ge w^{-1}\cdot\nu$. We let $\Gamma = \{ w\cdot\Lambda_i : w\in W,\ i\in I \}$.

Let $M_\bullet = (M_\gamma)_{\gamma\in\Gamma}$ be a collection of integers. We say that $M_\bullet$ satisfies the {\it edge inequalities} if
\begin{equation}\label{edge}
M_{w\cdot\Lambda_i} + M_{w\sigma_i\cdot\Lambda_i} - M_{w\cdot\Lambda_{i-1}} -
M_{w\cdot\Lambda_{i+1}} \le 0
\end{equation}
for all $i \in I$ and $w\in W$, where we understand $M_{w\cdot\Lambda_{-1}}=0$ and $M_{w\cdot\Lambda_{r+1}}=0$. From such a collection, we define the {\it pseudo-Weyl polytope}
\[
\pwp(M_\bullet) = \{ \alpha \in \mathfrak{h}_\RR : \langle \alpha,\gamma \rangle \ge M_\gamma \text{ for all } \gamma \in \Gamma \}.
\]
Associated to such a pseudo-Weyl polytope is a map $w\mapsto \mu_w$ defined by the equation
\[
\langle \mu_w , w\cdot\Lambda_i \rangle = M_{w\cdot\Lambda_i}.
\]
The coweights $\mu_w$ should be regarded as vertices of the pseudo-Weyl polytope, and the collection $(\mu_w)_{w\in W}$ is called the {\it GGMS} datum of the pseudo-Weyl polytope.

Let $w\in W$ and $i,j\in I$ be such that $w\sigma_i > w$, $w\sigma_j > w$, and $i\neq j$. Define a sequence $M_\bullet = (M_\gamma)_{\gamma\in\Gamma}$ to satisfy the {\it tropical Pl\"ucker relation} at $(w,i,j)$ provided $|i-j|>1$, or if $|i-j|=1$ and
\[
M_{w\sigma_i\cdot\Lambda_i} + M_{w\sigma_j\cdot\Lambda_j} = 
\min(M_{w\cdot\Lambda_i}+M_{w\sigma_i\sigma_j\cdot\Lambda_j},\ 
M_{w\sigma_j\sigma_i\cdot\Lambda_i} + M_{w\cdot\Lambda_j}).
\]
We say $M_\bullet$ satisfies the tropical Pl\"ucker relations if it satisfies the tropical Pl\"ucker relations at each $(w,i,j) \in W\times I^2$. Finally, a collection $M_\bullet = (M_\gamma)_{\gamma\in \Gamma}$ is called a {\it BZ datum} of coweight $(\mu_1,\mu_2)$ if the following hold:
\begin{enumerate}
\item $M_\bullet$ satisfies the tropical Pl\"ucker relations.
\item $M_\bullet$ satisfies the edge inequalities.
\item If $\mu_\bullet = (\mu_w)_{w\in W}$ is the GGMS datum of $\pwp(M_\bullet)$, then $\mu_e = \mu_1$ and $\mu_{w_\circ} = \mu_2$, where $e$ is the identity element of $W$.
\end{enumerate}

\begin{dfn}
If $M_\bullet$ is a BZ datum of coweight $(\mu_1,\mu_2)$, then the corresponding pseudo-Weyl polytope $\pwp(M_\bullet)$ is called an {\it MV polytope} of coweight $(\mu_1,\mu_2)$.
\end{dfn}

For $\nu \in P^\vee$ and a BZ datum $M_\bullet$ of coweight $(\mu_1,\mu_2)$, we define  $\nu + \pwp(M_\bullet) = \pwp(M_\bullet')$, where $M_\gamma' = M_\gamma + \langle\nu,\gamma\rangle$ for each $\gamma \in \Gamma$ and $M_\bullet'$ is a BZ datum of coweight $(\mu_1+\nu,\mu_2+\nu)$. This yields an action of $P^\vee$ on the set of BZ datum, and hence on the set of MV polytopes. The orbit of an MV polytope of coweight $(\mu_1,\mu_2)$ under this action is called a {\it stable MV polytope} of coweight $\mu_1-\mu_2$. Note that, for each stable MV polytope of weight $\mu$, we may choose the unique representative of coweight $(\nu+\mu,\nu)$. Denote the set of all stable MV polytopes by $\mathcal{MV}$.

Assume that $\ii = (i_1,\dots,i_N) \in R(w_\circ)$ is an arbitrary long word. We set $w_k^\ii = \sigma_{i_1}\cdots\sigma_{i_k}$ and $\beta_k^\ii = w_{k-1}^\ii\cdot\alpha_{i_k}^\vee$ for each $1 \le k \le N$. (Here, we understand $w_0^\ii = e$, the identity element of the Weyl group.)  The reduced word $\ii$ determines a path in $\pwp(M_\bullet)$ given by the consecutive vertices $\mu_e,\mu_{\sigma_{i_1}},\dots,\mu_{w_\circ}$, and we obtain the vector $L_\ii(\pwp(M_\bullet)) = (c_1,\dots,c_N)$ consisting of the lengths of the edges along the $\ii$-path in $\pwp(M_\bullet)$; that is,
\[
\mu_{w_k^\ii} - \mu_{w_{k-1}^\ii} = c_k \beta_k^\ii.
\]
Thus, each positive coroot $\beta_k^\ii$ determines a direction in the coweight lattice and $c_k$ gives the length of the $\beta_k^\ii$th leg in the polytope. 
The vector $L_\ii(\pwp(M_\bullet)) = (c_1,\dots,c_N)$ is called the {\it $\ii$-Lusztig datum} of the MV polytope $\pwp(M_\bullet)$.

\begin{prop}[\cite{Kam:10}]
For any $\ii\in R(w_\circ)$, there is a bijection between $\mathcal{MV}$ and $\ZZ_{\ge0}^N$ given by the $\ii$-Lusztig datum of an MV polytope.
\end{prop}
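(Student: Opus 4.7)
The plan is to prove the bijection by exhibiting an explicit inverse to $L_\ii$ and verifying mutual compatibility. Throughout, fix $\ii = (i_1,\dots,i_N) \in R(w_\circ)$. For the forward direction, one first observes that consecutive GGMS vertices $\mu_{w_{k-1}^\ii}$ and $\mu_{w_k^\ii}$ of any pseudo-Weyl polytope automatically differ by a scalar multiple of the coroot $\beta_k^\ii$, and for a genuine MV polytope convexity together with the edge inequalities forces this scalar to be a nonnegative integer. Hence $L_\ii$ indeed takes values in $\ZZ_{\ge 0}^N$.

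For the inverse direction, given $\cc = (c_1,\dots,c_N) \in \ZZ_{\ge 0}^N$, I would first lay out the candidate vertices along the $\ii$-path by $\mu_{w_0^\ii} = 0$ and $\mu_{w_k^\ii} = \mu_{w_{k-1}^\ii} + c_k \beta_k^\ii$ for $k = 1,\dots,N$. These vertices determine the values $M_{w_k^\ii \cdot \Lambda_j} = \langle \mu_{w_k^\ii}, w_k^\ii \cdot \Lambda_j \rangle$ for every $k$ and every $j \in I$. The remaining values $M_\gamma$ for $\gamma \in \Gamma$ away from the $\ii$-chain would then be computed recursively by solving the tropical Pl\"ucker relation for the unknown entry in terms of its four neighbors, spreading outward through the Bruhat order until a full collection $M_\bullet = (M_\gamma)_{\gamma \in \Gamma}$ has been produced.

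The key technical step, and what I expect to be the main obstacle, is to show that this recursive extension is well-defined and yields a genuine BZ datum. By Tits' theorem any two reduced expressions for the same Weyl group element are related by a sequence of braid moves, so well-definedness of the extension reduces to checking compatibility under the rank-two braid moves of types $A_1 \times A_1$ and $A_2$. This is exactly the purpose for which the min-formula defining the tropical Pl\"ucker relation was designed, and the verification is a direct piecewise-linear calculation of the sort carried out by Berenstein and Zelevinsky. Once compatibility is in hand, the hypothesis $c_k \ge 0$ translates, after unwinding the recursion, into the edge inequalities \eqref{edge} for the full datum $M_\bullet$. Mutual inverseness then follows immediately: the $\ii$-Lusztig datum of the polytope built from $\cc$ is $\cc$ by construction, and conversely reconstructing $M_\bullet$ from its own $\ii$-Lusztig datum recovers the original, since the tropical Pl\"ucker extension is forced by the values along the $\ii$-chain. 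As a pleasant byproduct, the composition $L_{\ii'} \circ L_\ii^{-1} \colon \ZZ_{\ge 0}^N \to \ZZ_{\ge 0}^N$ yields the Berenstein-Zelevinsky piecewise-linear transition maps between PBW parametrizations.
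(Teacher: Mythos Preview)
The paper does not actually prove this proposition: it is stated with the attribution [Kam:10] and used as a black box, with no argument supplied. So there is no ``paper's own proof'' to compare against.

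That said, your outline is essentially the strategy Kamnitzer uses in the cited reference. The forward map is well-defined because the edge inequality \eqref{edge} at $(w_{k-1}^\ii,i_k)$ is precisely the statement $c_k \ge 0$, and integrality follows since the $M_\gamma$ are integers. For the inverse, laying down vertices along the $\ii$-path and then propagating the remaining $M_\gamma$ via the tropical Pl\"ucker relations is indeed the construction; compatibility under braid moves is the substance of the argument and is where the real work lies. Your sketch correctly identifies this as the main obstacle but does not carry it out, and one should be aware that deducing \emph{all} edge inequalities (not just those along the $\ii$-path) from the nonnegativity of the $c_k$ and the tropical Pl\"ucker relations is a genuine piece of work in Kamnitzer's paper rather than an immediate consequence of ``unwinding the recursion.'' As written, your proposal is a faithful high-level summary of the cited proof, not an independent argument.
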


We have already discussed that the Lusztig parametrization of an element $b\in \CB$ is a bijection $\phi_\ii\colon \CB\longrightarrow \ZZ_{\ge0}^N$ for any $\ii\in R(w_\circ)$. Thus, for any $b\in \CB$, there is an associated MV polytope, which we denote by $\pwp(b)$, with $\ii$-Lusztig datum $\phi_\ii(b)$.

Since we are considering the root system of type $A$, we have an isomorphism $\eta\colon \mathfrak{h}_\RR \longrightarrow \mathfrak{h}_\RR^\vee$ given by $\eta(\alpha_i^\vee)=\alpha_i$ for $i \in I$. If $\mathfrak{P}$ is a stable MV polytope of coweight $\mu$, then we also say that $A$ is of weight $\eta(\mu)$ and write
$\wt(\mathfrak{P})=\eta(\mu)$. Kamnitzer proved the following.

\begin{thm}[\cite{Kam:10}]\label{thm:MVcan}
The map $b\mapsto \pwp(b)$ is a weight preserving bijection $\CB\longrightarrow \mathcal{MV}$ such that $\phi_\ii(b) = L_\ii(\pwp(b))$.
\end{thm}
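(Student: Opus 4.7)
The plan is to build the bijection through the common parameter set $\ZZ_{\ge 0}^N$. Fix any long word $\ii \in R(w_\circ)$. By Proposition \ref{prop:Lusz} the Lusztig parametrization $\phi_\ii \colon \CB \to \ZZ_{\ge 0}^N$ is a bijection, and the immediately preceding proposition supplies a bijection $L_\ii \colon \mathcal{MV} \to \ZZ_{\ge 0}^N$. Composing these, I would define $\pwp(b)$ to be the unique stable MV polytope whose $\ii$-Lusztig datum equals $\phi_\ii(b)$. The content of the theorem is then twofold: that this assignment is independent of the auxiliary choice of $\ii$, and that it intertwines weight with coweight under $\eta$.

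To prove $\ii$-independence I would compare the transition rules between the two bijections under a change of reduced word. Since any two elements of $R(w_\circ)$ are connected by braid moves, it suffices to check compatibility across a single rank-two braid relation. For Lusztig data of elements of $\CB$, Berenstein--Zelevinsky \cite{BZ:96} derived explicit piecewise-linear transition formulas given by the tropicalization of the $\mathrm{SL}_3$ Plücker relation. For $\ii$-Lusztig data of MV polytopes, the tropical Plücker condition built into a BZ datum encodes exactly the same min-formula, since on a two-face the pair of opposite vertices $\mu_{w\sigma_i}, \mu_{w\sigma_j}$ determines the remaining two vertices precisely through that formula. Matching these two sets of rules, together with the trivial base case of a single simple reflection where both data reduce to a nonnegative integer, yields the well-definedness of $b \mapsto \pwp(b)$.

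For weight preservation, I would use that in type $A$ the isomorphism $\eta$ sends $\alpha_i^\vee \mapsto \alpha_i$, so $\eta(\beta_k^\ii) = w_{k-1}^\ii \cdot \alpha_{i_k}$ is the positive root attached to the $k$th edge of the $\ii$-path. If $\phi_\ii(b) = (c_1, \dots, c_N)$, then tracking how the Lusztig braid operators $T_{i_1}\cdots T_{i_{k-1}}$ shift weights in \eqref{eq:lustparam} gives $\wt(b) = -\sum_k c_k \eta(\beta_k^\ii)$. On the MV side, the telescoping identity $\mu_{w_\circ} - \mu_e = \sum_k c_k \beta_k^\ii$ along the $\ii$-path of $\pwp(b)$, together with the stable-polytope normalization choosing a representative of coweight $(\mu + \nu, \nu)$, gives $\wt(\pwp(b)) = \eta(\mu_e - \mu_{w_\circ})$; the two computations agree, and the last claim $\phi_\ii(b) = L_\ii(\pwp(b))$ is immediate from the very definition of $\pwp(b)$.

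The main obstacle is the braid-move compatibility in the second paragraph: the piecewise-linear recursions on the two sides must literally coincide, and verifying this requires the rank-two analysis of \cite{BZ:96} matched against the tropical Plücker condition governing BZ data from \cite{Kam:10}. Once that compatibility is in hand, well-definedness, bijectivity, and weight preservation all follow by transport of structure through $\phi_\ii$ and $L_\ii$.
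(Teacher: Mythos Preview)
The paper does not supply its own proof of this theorem: it is quoted verbatim from \cite{Kam:10} and used as a black box to derive Corollary~\ref{cor:segd''}. There is therefore nothing in the present paper to compare your argument against.

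That said, your outline is a faithful sketch of Kamnitzer's actual strategy. He defines the map exactly as you do, by matching $\phi_\ii$ with the $\ii$-Lusztig datum $L_\ii$, and the substance of \cite{Kam:10} is precisely the braid-move check you isolate: the tropical Pl\"ucker relation on a rank-two face of a BZ datum coincides with Lusztig's piecewise-linear transition map between PBW parametrizations for adjacent reduced words (cf.\ \cite{BZ:01,Luszt:90}). Your weight computation via the telescoping sum along the $\ii$-path is also correct. The only caveat is that your appeal to \cite{BZ:96} for the Lusztig-side transition is type-$A$-specific; Kamnitzer handles all simply-laced types using the general formulation from \cite{BZ:01} and Lusztig's original rank-two computations, but in the present $A_r$ setting your citation suffices.
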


Assume that $\mathfrak{P} \in \mathcal{MV}$. We define $\nz(L_\ii(\mathfrak{P}))$ to be the number of nonzero entries in the $\ii$-Lusztig datum $L_\ii(\mathfrak{P})$. We see from the definitions that $\nz(L_\ii(\mathfrak{P}))$ is nothing but the number of edges in the $\ii$-path of $\mathfrak{P}$. The next corollary is obtained from Corollary \ref{cor:seg-nz} and Theorem \ref{thm:MVcan}.

\begin{cor}\label{cor:segd''}
Let $b\in \mathcal{T}(\infty)$ and denote by the same notation $b$ the corresponding element in $\CB$. Then we have
\[
\seg(b) = \nz(L_\ii(\pwp(b))).
\]
\end{cor}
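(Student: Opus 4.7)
The plan is to simply chain together the two results cited in the sentence preceding the corollary: Corollary~\ref{cor:seg-nz} and Theorem~\ref{thm:MVcan}. There is no genuine new content here; the corollary is a translation of an equality of integers between two parametrizations of the canonical basis.

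First, I would invoke Theorem~\ref{thm:MVcan}, which says that the bijection $b \mapsto \pwp(b)$ from $\CB$ to $\mathcal{MV}$ satisfies $\phi_\ii(b) = L_\ii(\pwp(b))$ as sequences in $\ZZ_{\ge 0}^N$. Since both sides are literally the same $N$-tuple of nonnegative integers, they have the same number of nonzero entries; that is,
\[
\nz(\phi_\ii(b)) = \nz(L_\ii(\pwp(b))).
\]

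Next, I would apply Corollary~\ref{cor:seg-nz}, which gives $\seg(b) = \nz(\phi_\ii(b))$ under the identification of $b \in \mathcal{T}(\infty)$ with its image in $\CB$. Combining the two equalities yields $\seg(b) = \nz(L_\ii(\pwp(b)))$, as claimed.

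There is no real obstacle, since the only substantive step---the identification $\seg(b) = \nz(\phi_\ii(b))$---was already carried out in the previous section via Lemmas~\ref{lem:Aboxes} and \ref{lem:Apath} and the proof of Theorem~\ref{thm:main-A}. The remaining ingredient, Kamnitzer's identification of Lusztig data of MV polytopes with Lusztig parametrizations of the canonical basis, is quoted from \cite{Kam:10}. So the proof will be only a few lines chaining the two citations.
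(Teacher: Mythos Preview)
Your proposal is correct and matches the paper's approach exactly: the paper simply states that the corollary is obtained from Corollary~\ref{cor:seg-nz} and Theorem~\ref{thm:MVcan}, which is precisely the two-step chaining you describe.
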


We are now ready to present the Gindikin-Karpelevich formula as a sum over MV polytopes. 

\begin{cor}
Let $\Phi$ be the root system of $\mathfrak{sl}_{r+1}$. Then for any $\ii\in R(w_\circ)$, we have
\[
\prod_{\alpha\in\Phi^+} \frac{1-t^{-1}\zz^\alpha}{1-\zz^\alpha} =
\sum_{\mathfrak{P}\in\mathcal{MV}} (1-t^{-1})^{\nz(L_\ii(\mathfrak{P}))}\zz^{-\wt(\mathfrak{P})}.
\]
\end{cor}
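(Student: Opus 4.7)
The plan is to deduce the corollary directly by combining Proposition \ref{prop:kl} with Theorem \ref{thm:MVcan}; no new combinatorial identity is needed, since the content of the statement is simply a translation of the canonical basis sum into the language of MV polytopes. I would not route the argument through Corollary \ref{cor:segd''} or Theorem \ref{thm:main-A}, because those are tied to the specific long word $\ii_{A_r}$, whereas the corollary as stated is uniform in $\ii \in R(w_\circ)$.

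Fixing an arbitrary $\ii \in R(w_\circ)$, I would start from the identity
\[
\prod_{\alpha \in \Phi^+} \frac{1-t^{-1}\zz^{\alpha}}{1-\zz^\alpha} = \sum_{b\in\CB} (1-t^{-1})^{\nz(\phi_\ii(b))} \zz^{-\wt(b)},
\]
which is exactly Proposition \ref{prop:kl} and holds for any long word. Then I would invoke Theorem \ref{thm:MVcan}, which supplies a weight-preserving bijection $b \mapsto \pwp(b)$ from $\CB$ onto $\mathcal{MV}$ satisfying $\phi_\ii(b) = L_\ii(\pwp(b))$. Reindexing the sum along this bijection converts $\wt(b)$ into $\wt(\pwp(b))$ and $\nz(\phi_\ii(b))$ into $\nz(L_\ii(\pwp(b)))$ term by term, at which point the desired formula falls out.

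The mildly surprising content of the corollary is that the right-hand side, though written as a function of $\ii$, must in fact be independent of that choice because the left-hand side is. This is built into the two inputs, each of which already holds for arbitrary $\ii \in R(w_\circ)$; equivalently, the nonzero-entry statistic on Lusztig data is shuffled by the piecewise-linear changes of coordinates of \cite{BZ:96} in such a way that the generating function is preserved. There is no real obstacle in the argument: the substantive work has been absorbed into Proposition \ref{prop:kl} and into Kamnitzer's identification of canonical basis elements with MV polytopes via matching Lusztig data with $\ii$-edge lengths, so the proof is essentially a change of summation variable.
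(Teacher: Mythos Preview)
Your argument is correct. Starting from Proposition~\ref{prop:kl} (valid for every long word) and transporting the sum along the weight-preserving bijection of Theorem~\ref{thm:MVcan}, which matches $\phi_\ii$ with $L_\ii$ for every $\ii$, yields the corollary immediately and uniformly in $\ii$.

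The paper, by contrast, records the proof as a consequence of Theorem~\ref{thm:main-A} together with Corollary~\ref{cor:segd''} and Theorem~\ref{thm:MVcan}; that is, it passes through the tableau statistic $\seg(b)$ rather than invoking Proposition~\ref{prop:kl} directly. As you anticipated, that route is anchored to the specific word $\ii_{A_r}$: Corollary~\ref{cor:seg-nz}, on which Corollary~\ref{cor:segd''} rests, is proved only for $\ii=\ii_{A_r}$, and in fact the pointwise equality $\seg(b)=\nz(\phi_\ii(b))$ fails for other words (already in $A_2$ the element with $\phi_{(1,2,1)}=(0,1,0)$ has $\phi_{(2,1,2)}=(1,0,1)$). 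So the paper's chain of citations literally establishes the identity for $\ii_{A_r}$, with the extension to arbitrary $\ii$ implicitly relying on the $\ii$-independence already built into Proposition~\ref{prop:kl}. Your approach makes that dependence explicit and bypasses the tableau detour entirely; what the paper's route buys instead is the concrete interpretation of the exponent as $\seg(b)$, which is the thematic point of the section but is not needed for the bare statement of the corollary.
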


\begin{proof}
The corollary follows from Theorem \ref{thm:MVcan}, Corollary \ref{cor:segd''} and Theorem \ref{thm:main-A}.
\end{proof}

\subsection{Quiver varieties}

Let $I=\{ 1, \dots , r\}$ be the set of vertices and $H$ be the set of arrows such that $i \rightarrow j$ with $i-j=\pm 1$, $i,j \in I$. Then $(I, H)$ is the double quiver of type $A_r$:
\[
\begin{tikzpicture}
\matrix (m) [matrix of math nodes, row sep=.3in, column sep=.5in, text height=1.5ex, text depth=0.25ex]
 { 1 & 2 & \phantom{x}\cdots\phantom{x} & r.  \\ };
\path[<-]
 ([yshift=2]m-1-1.east) edge[bend left] ([yshift=2]m-1-2.west)
 ([yshift=-2]m-1-2.west) edge[bend left] ([yshift=-2]m-1-1.east)
 ([yshift=2]m-1-2.east) edge[bend left] ([yshift=2]m-1-3.west)
 ([yshift=-2]m-1-3.west) edge[bend left] ([yshift=-2]m-1-2.east)
 ([yshift=2]m-1-3.east) edge[bend left] ([yshift=2]m-1-4.west)
 ([yshift=-2]m-1-4.west) edge[bend left] ([yshift=-2]m-1-3.east);
\end{tikzpicture}
\]  
If $h\in H$ is the arrow $i \rightarrow j$, then we set $\out(h) = i$ and $\IN(h) = j$. We choose an orientation $\Omega \subset H$ of the quiver and its opposite $\overline \Omega$ so that we have
\begin{align}
(I,\Omega) &= 1 \longleftarrow 2 \longleftarrow \cdots \longleftarrow r,\label{eq:Iomega}\\
(I,\overline\Omega) &= 1 \longrightarrow 2 \longrightarrow \cdots \longrightarrow r.
\end{align}

Given an $I$-graded vector space $\VV= \bigoplus_{i=1}^r \VV_i$, we set 
\[
\ddim \VV = \sum_{i\in I} \dim (\VV_i)\, \alpha_i \in Q^+,
\]
where $Q^+ = \bigoplus_{i=1}^r \ZZ_{\ge 0}\,  \alpha_i$.  Now define $\CC$-vector spaces
\begin{align*}
\EE_\VV &:= \bigoplus_{h\in H}\operatorname{Hom}(\VV_{\out(h)},\VV_{\IN(h)}) , \\
\EE_{\VV,\Omega} &:= \bigoplus_{h \in \Omega} \operatorname{Hom}(\VV_{\out(h)},\VV_{\IN(h)}) .
\end{align*}
The group $G_\VV := \prod_{i\in I} \operatorname{GL}(\VV_i)$ acts on both $\EE_\VV$ and $\EE_{\VV,\Omega}$ by
\[
(g,x) = \big((g_i), (x_h)\big) \mapsto \big(g_{\IN(h)} \, x_h \, g_{\out(h)}^{-1}\big).\]
Let $\omega$ be the nondegenerate, $G_\VV$-invariant, symplectic form on $\EE_\VV$
defined by
\[
\omega(x,y) := \sum_{h\in H} \epsilon(h)\operatorname{Tr}(x_hy_{\overline h}),
\]
where $\epsilon\colon H \longrightarrow \{\pm1\}$ is the function such that $\epsilon(h) = 1$ if $h\in \Omega$ and $\epsilon(h) = -1$ if $h \in \overline\Omega$. Let $\mathfrak{gl}_\VV = \bigoplus_{i\in I} \operatorname{End}(\VV_i)$ be the Lie algebra of $G_\VV$, which acts on $\EE_\VV$ via $A \cdot x = [A,x]$, for $A\in\mathfrak{gl}_\VV$ and $x \in \EE_\VV$. Let $\mu\colon \EE_\VV \longrightarrow \mathfrak{gl}_\VV$ be the moment map associated with the $G_\VV$-action on $\EE_\VV$, whose $i$-th component $\mu_i\colon \EE_\VV \longrightarrow \operatorname{End}(\VV_i)$ is given by
\[
\mu_i(x) = \sum_{\substack{h\in H \\ i = \IN(h)}} \epsilon(h)x_hx_{\overline h}.
\]

Finally, we define 
\[
\Lambda_\VV = \{ x \in \EE_\VV : \mu(x) = 0  \}.
\] 
The variety $\Lambda_\VV$ is called the {\em Lusztig quiver variety}.
For $\alpha=\sum_{i=1}^r k_i \alpha_i \in Q^+$, let $\VV(\alpha)= \bigoplus_{i=1}^r \VV_i(\alpha)$ be an $I$-graded vector space with $\ddim \VV(\alpha)=\alpha$. Let $\operatorname{Irr}\Lambda(\alpha)$ denote the set of irreducible components of $\Lambda(\alpha):= \Lambda_{\VV(\alpha)}$ and define
\[
\mathfrak{X}(\infty) = \bigsqcup_{\alpha \in Q^+} \operatorname{Irr} \Lambda(\alpha).
\]
Kashiwara and Saito defined a crystal structure on $\mathfrak{X}(\infty)$ and showed the following theorem.

\begin{thm}[{\cite{KS:97}}]
There is a crystal isomorphism $\mathfrak{X}(\infty) \cong \BB(\infty)$.
\end{thm}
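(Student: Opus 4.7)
The plan is to equip $\mathfrak{X}(\infty)$ with crystal operators coming directly from the geometry of $\Lambda(\alpha)$, and then to identify the resulting crystal with $\BB(\infty)$ via Kashiwara's abstract characterization of the latter. To set up the combinatorial data, for $x \in \EE_\VV$ and $i \in I$ I would define
\[
\varepsilon_i(x) = \dim \operatorname{coker}\Bigl(\bigoplus_{h\,:\,\IN(h)=i} \VV_{\out(h)} \xrightarrow{(x_h)} \VV_i\Bigr), \qquad \varepsilon_i(X) = \min_{x\in X}\varepsilon_i(x)
\]
for $X \in \operatorname{Irr}\Lambda(\alpha)$. This function is upper semicontinuous on $\EE_\VV$, so the locus $\{x\in X : \varepsilon_i(x) = \varepsilon_i(X)\}$ is open and dense in $X$. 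Together with $\wt(X) = -\ddim\VV$ and $\varphi_i(X) = \varepsilon_i(X) + \langle\alpha_i^\vee,\wt(X)\rangle$, this provides the weight and string-length data on irreducible components.

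To define the Kashiwara operators, I would introduce the Hecke correspondence $\widetilde{\Lambda} \subset \Lambda(\alpha) \times \Lambda(\alpha - \alpha_i)$ parametrizing pairs $(x, x')$ equipped with a graded surjection $\pi\colon \VV \twoheadrightarrow \VV'$ that is an isomorphism away from node $i$, has one-dimensional kernel at $i$, and intertwines $x$ with $x'$. Let $p$ and $q$ denote the projections to the two factors. For an irreducible component $X$ with $\varepsilon_i(X) = c > 0$, the preimage under $p$ of the open stratum $\{x \in X : \varepsilon_i(x) = c\}$ is smooth with projective-space fibers, and its image under $q$ is open and dense in a unique component $\tilde{e}_i X \in \operatorname{Irr}\Lambda(\alpha - \alpha_i)$; the operator $\tilde{f}_i$ is defined symmetrically by reversing the roles of $p$ and $q$. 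A fiber-dimension computation, combined with Lusztig's theorem that each $\Lambda(\alpha)$ is equidimensional Lagrangian in $\EE_\VV$, then yields the crystal axioms and the identities $\tilde{e}_i \tilde{f}_i X = X$ and $\varphi_i(\tilde{f}_i X) = \varphi_i(X) + 1$ wherever they should hold.

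I would then invoke Kashiwara's characterization of $\BB(\infty)$: up to isomorphism, $\BB(\infty)$ is the unique crystal containing a distinguished element $b_\infty$ of weight zero with $\varepsilon_i(b_\infty) = 0$ for all $i$, such that for every $i \in I$ there exists a strict embedding $\Psi_i\colon \BB(\infty) \hookrightarrow \BB(\infty) \otimes B_i$ sending $b_\infty$ to $b_\infty \otimes b_i$, where $B_i$ is the elementary crystal of \cite{KS:97}. On the geometric side, $b_\infty$ corresponds to the unique point of $\Lambda(0)$, and one constructs
\[
\Psi_i(X) = \bigl(\tilde{e}_i^{\,\varepsilon_i(X)} X\bigr) \otimes \tilde{f}_i^{\,\varepsilon_i(X)}(b_i),
\]
whose strictness expresses exactly the compatibility of the $\varepsilon_i$-stratification with the Hecke correspondence. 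Verifying this compatibility and the vanishing $\varepsilon_j(b_\infty)=0$ then forces $\mathfrak{X}(\infty) \cong \BB(\infty)$.

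The main obstacle, I expect, is showing that the Hecke correspondence genuinely descends to a bijection between \emph{components} rather than merely between open strata, and that the construction commutes appropriately when iterated. This is precisely where one needs the equidimensional Lagrangian property of $\Lambda(\alpha)$ and the irreducibility of the generic $G_\VV$-behavior on the open stratum; once these geometric inputs are in place, the verification of Kashiwara's axioms reduces to a formal bookkeeping that parallels the algebraic setup.
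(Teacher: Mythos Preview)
The paper does not give a proof of this theorem: it is quoted verbatim as a result of Kashiwara and Saito \cite{KS:97} and used as a black box in the discussion of quiver varieties. There is therefore no ``paper's own proof'' to compare against.

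That said, your sketch is a faithful outline of the original Kashiwara--Saito argument: define $\varepsilon_i$ geometrically on irreducible components via the generic cokernel dimension, build $\widetilde e_i,\widetilde f_i$ through a correspondence that removes or adds a one-dimensional piece at vertex $i$, and conclude by invoking Kashiwara's uniqueness characterization of $\BB(\infty)$ in terms of strict embeddings $\BB(\infty)\hookrightarrow\BB(\infty)\otimes B_i$. A couple of points of caution if you intend to flesh this out. First, in \cite{KS:97} the operators are not constructed via a single Hecke-type correspondence of the shape you describe but via a diagram involving the variety $\Lambda(c,\bar\alpha)$ of pairs (nilpotent representation on $\VV$, $i$-graded subspace of the appropriate codimension stable under $x$); the argument that this induces a bijection on components uses a careful dimension count together with Lusztig's Lagrangian/equidimensionality result, and your ``projective-space fibers'' description is not quite accurate in general. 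Second, the characterization you invoke requires more than the existence of $\Psi_i$: one must also check that for every $b\neq b_\infty$ the image $\Psi_i(b)$ is of the form $b'\otimes \widetilde f_i^{\,n} b_i$ with $n>0$ for some $i$, and that $\varepsilon_i(b^\star)=0$ for the first tensor factor $b^\star$ in $\Psi_i(b)$. These are the conditions (5)--(7) of Kashiwara--Saito's Proposition~3.2.3, and the geometric verification of the last one is where most of the work lies.
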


For $k,\ell \in \ZZ$ such that $1 \le k \le \ell \le r$, we define $\big(\VV(k,\ell),x(k,\ell)\big)$ to be the representation of $(I, \Omega)$ with $\VV(k,\ell)_i = \CC$ for $k \le i \le \ell$ and $\VV(k,\ell)_i = 0$ otherwise. The maps $x(k,\ell)$ between the nonzero vector spaces are the identity and zero otherwise. The representation $\big(\VV(k,\ell),x(k,\ell)\big)$ is indecomposable, and any indecomposable finite-dimensional representation $(\VV,x)$ of $(I, \Omega)$ is isomorphic to some $\big(\VV(k,\ell),x(k,\ell)\big)$.

%\begin{ex}
%If $r=5$.  Then $\big(\VV(2,4),x(2,4)\big)$ can be visualized as 
%\[
%0 \overset{0}\longleftarrow \CC \overset{1}\longleftarrow \CC \overset{1}\longleftarrow \CC \overset{0}\longleftarrow 0.
%\]
%\end{ex}

The following proposition is well-known.

\begin{prop}[\cite{Luszt:91}]
Let $(I,\Omega)$ be the quiver of type $A_r$ from \eqref{eq:Iomega}. The irreducible components of $\Lambda_\VV$ are the closures of conormal bundles of the $G_\VV$-orbits in $\EE_{\VV,\Omega}$.
\end{prop}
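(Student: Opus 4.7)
The plan is to identify $\EE_\VV$ with the cotangent bundle $T^*\EE_{\VV,\Omega}$, to recognize $\Lambda_\VV$ as the zero fiber of the moment map for the lifted $G_\VV$-action, and then to invoke the standard fact that when a reductive group acts on a smooth variety with only finitely many orbits, this zero fiber decomposes as the union of conormal bundles to the orbits.

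First I would use the symplectic form $\omega$ and the decomposition $\EE_\VV = \EE_{\VV,\Omega}\oplus \EE_{\VV,\overline\Omega}$. The pairing of the two summands under $\omega$ is $G_\VV$-equivariant and perfect, which yields an identification $\EE_\VV \cong T^*\EE_{\VV,\Omega}$ in which $\EE_{\VV,\Omega}$ is the base and $\EE_{\VV,\overline\Omega}$ is the cotangent fiber. A direct computation (using the trace pairing $\mathfrak{gl}_\VV \cong \mathfrak{gl}_\VV^\ast$) shows that the map $\mu$ defined by $\mu_i(x)=\sum_{i=\IN(h)} \epsilon(h)x_h x_{\overline h}$ coincides with the canonical moment map for the lifted $G_\VV$-action on this cotangent bundle, so $\Lambda_\VV = \mu^{-1}(0)$.

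Next, since $(I,\Omega)$ is a linearly oriented quiver of Dynkin type $A_r$, Gabriel's theorem applies: every finite-dimensional representation of $(I,\Omega)$ decomposes uniquely (up to order) as a direct sum of the indecomposables $\bigl(\VV(k,\ell),x(k,\ell)\bigr)$ introduced above. Consequently $G_\VV$ acts on $\EE_{\VV,\Omega}$ with only finitely many orbits, one for each multiplicity tuple $(m_{k,\ell})_{1\le k\le \ell\le r}$ satisfying $\sum m_{k,\ell}\,\ddim\VV(k,\ell)=\ddim\VV$.

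To finish, I would invoke the general principle: for a reductive group $G$ acting on a smooth variety $X$, a covector $\xi\in T^\ast_p X$ satisfies $\mu(p,\xi)=0$ if and only if $\xi$ annihilates $T_p(G\cdot p)$, which gives
\[
\mu^{-1}(0) \;=\; \bigsqcup_{O\subset X} T^*_O X,
\]
the union ranging over $G$-orbits in $X$. Each conormal bundle $T^*_O X$ is smooth and irreducible of dimension $\dim O + \operatorname{codim}_X O = \dim X$, so when the orbit set is finite the right-hand side is a finite union of equidimensional irreducibles, and its irreducible components are precisely the closures $\overline{T^*_O\EE_{\VV,\Omega}}$. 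The only substantive input is Gabriel's theorem; the rest is formal, already encoded in the definitions of $\omega$ and $\mu$. The mildly delicate point is verifying the moment-map identification in the third paragraph, since one has to track the sign function $\epsilon$ carefully, but this is a direct linear-algebra check.
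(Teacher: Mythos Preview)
Your argument is correct and is the standard proof of this result. The paper itself does not supply a proof; it states the proposition as well-known and attributes it to Lusztig, and your sketch---identify $\EE_\VV$ with $T^*\EE_{\VV,\Omega}$, recognize $\mu$ as the moment map for the lifted $G_\VV$-action so that $\mu^{-1}(0)$ is the union of conormal bundles to orbits, then invoke Gabriel's theorem to get finitely many orbits---is precisely the argument underlying Lusztig's result.
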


Assume that $X \in \mathfrak{X}(\infty)$ is an irreducible component of $\Lambda_\VV$ for some $\VV=\VV(\alpha)$. Then there exists the corresponding $G_\VV$-orbit $\mathscr{O}$, which consists of all the representations of $(I, \Omega)$ that are isomorphic to a sum $\VV(X)$ of indecomposable representations $\big(\VV(k,\ell),x(k,\ell)\big)$. We define $\gamma(X)$ to be the number of different indecomposable representations (not counting multiplicity) in the sum $\VV(X)$. We also set $\wt(X)=\ddim \VV$. We obtain the following interpretation of Thoerem \ref{thm:main-A} in the framework of the quiver variety: 

\begin{cor}
Let $\Phi$ be the root system of $\mathfrak{sl}_{r+1}$.  Then
\begin{equation}
\prod_{\alpha\in\Phi^+} \frac{1-t^{-1}\bm z^{\alpha}}{1-\bm z^{\alpha}} = \sum_{X\in \mathfrak{X}(\infty)} (1-t^{-1})^{\gamma(X)} \bm z^{\wt(X)}.
\end{equation}
\end{cor}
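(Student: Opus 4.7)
The plan is to package the previous results together: Theorem \ref{thm:main-A} and Corollary \ref{cor:seg-nz} already express the Gindikin-Karpelevich product as a sum over $\CB$ weighted by $(1-t^{-1})^{\nz(\phi_\ii(b))}$, while the crystal isomorphism $\mathfrak{X}(\infty) \cong \BB(\infty) \cong \CB$ transfers this sum to one indexed by irreducible components $X$. So the whole content of the corollary reduces to the identity $\gamma(X) = \nz(\phi_\ii(b))$ under this identification, for $\ii = \ii_{A_r}$.

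To establish that identity, I would first recall Lusztig's construction: by the cited proposition, each $X \in \operatorname{Irr}\Lambda_\VV$ is the closure of the conormal bundle of a unique $G_\VV$-orbit $\mathscr{O}$ in $\EE_{\VV,\Omega}$. Since $(I,\Omega)$ is a type $A_r$ quiver with linear orientation, Gabriel's theorem gives that the orbits $\mathscr{O}$ are parametrized by tuples $(m_{k,\ell})_{1\le k\le \ell \le r}$ of nonnegative integers via
\[
\VV(X) \cong \bigoplus_{1\le k\le \ell\le r} \VV(k,\ell)^{\oplus m_{k,\ell}},
\]
and the positive roots $\alpha_k+\alpha_{k+1}+\cdots+\alpha_\ell$ corresponding to the indecomposables $\VV(k,\ell)$ exhaust $\Phi^+$. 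By definition, $\gamma(X)$ is the number of pairs $(k,\ell)$ for which $m_{k,\ell}\neq 0$.

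The second step is to match the multiplicities $m_{k,\ell}$ with the entries of $\phi_{\ii_{A_r}}(b)$ for $b\in\CB$ corresponding to $X$ under the crystal isomorphism. The reduced word $\ii_{A_r} = (1,2,1,3,2,1,\dots,r,r-1,\dots,2,1)$ is adapted to the orientation $\Omega$ in the sense of Lusztig, and it is a classical fact (going back to Lusztig \cite{Luszt:91}) that for such adapted reduced words the PBW basis element $f_{\ii_{A_r}}^{\cc}$ (with $\cc = (c_1,\dots,c_N)$) is attached to the orbit with multiplicity vector $m_{k,\ell} = c_{k(\ell)}$, where the indexing matches the ordering of $\Phi^+$ induced by $\ii_{A_r}$. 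Under the identification $\CB \leftrightarrow \mathfrak{X}(\infty)$ via the crystal isomorphism, the Lusztig parameter $\phi_{\ii_{A_r}}(b)$ is therefore exactly the multiplicity tuple $(m_{k,\ell})$ of the representation $\VV(X)$. Consequently $\nz(\phi_{\ii_{A_r}}(b)) = \gamma(X)$.

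Combining these two pieces with Corollary \ref{cor:seg-nz} and Theorem \ref{thm:main-A} immediately yields
\[
\prod_{\alpha\in\Phi^+} \frac{1-t^{-1}\bm z^{\alpha}}{1-\bm z^{\alpha}}
= \sum_{b\in\CB} (1-t^{-1})^{\nz(\phi_{\ii_{A_r}}(b))} \bm z^{-\wt(b)}
= \sum_{X\in\mathfrak{X}(\infty)} (1-t^{-1})^{\gamma(X)} \bm z^{\wt(X)},
\]
after noting that the weight identification $\wt(X) = \ddim \VV = -\wt(b)$ is built into the crystal isomorphism (Kashiwara-Saito use the opposite sign convention on weights of $\BB(\infty)$). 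The only step with any technical substance is the second one — verifying that the Lusztig parametrization at the specific reduced word $\ii_{A_r}$ really records the indecomposable multiplicities — and this is a known compatibility between PBW bases and quiver representations for reduced words adapted to the orientation, so the argument is mostly bookkeeping once that dictionary is invoked.
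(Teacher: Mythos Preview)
Your proposal is correct and aligns with the paper's intended argument. The paper actually states this corollary without proof, presenting it as an immediate ``interpretation of Theorem \ref{thm:main-A} in the framework of the quiver variety'' after having set up the definition of $\gamma(X)$; your write-up supplies exactly the details the paper leaves implicit, namely that $\ii_{A_r}$ is adapted to the orientation $\Omega$ and hence the Lusztig datum $\phi_{\ii_{A_r}}(b)$ records the indecomposable multiplicities $(m_{k,\ell})$ of the corresponding orbit, so that $\nz(\phi_{\ii_{A_r}}(b)) = \gamma(X)$.

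One small point worth tightening: in step 5 you pass from Lusztig's bijection $\CB \leftrightarrow \{\text{orbits}\}$ (PBW basis and perverse sheaves) to the Kashiwara--Saito crystal isomorphism $\BB(\infty) \cong \mathfrak{X}(\infty)$ as if they were manifestly the same identification. They are compatible, but this compatibility is itself a theorem (it follows from the fact that both are crystal morphisms to $\BB(\infty)$ and such a morphism is unique). You acknowledge this is ``the only step with any technical substance,'' which is exactly right; citing the uniqueness of the crystal structure on $\BB(\infty)$, or Saito's explicit comparison, would close that loop. Also, you could streamline the opening by invoking Proposition~\ref{prop:kl} directly for the sum over $\CB$, rather than routing through Theorem~\ref{thm:main-A} and Corollary~\ref{cor:seg-nz}.
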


\end{document}